\documentclass[reqno,12pt]{amsart}%
\usepackage[dvips]{graphicx}
\usepackage{mathrsfs}
\usepackage{color}
\usepackage{amsmath}
\usepackage{amsfonts}
\usepackage{amssymb}%

\newcommand{\dotW}{\dot{W}}

\newcommand{\cA}{\mathcal{A}}

\newcommand{\cU}{\mathcal{U}}
\newcommand{\cJ}{\mathcal{J}}
\newcommand{\cR}{\mathcal{R}}

\newcommand{\cL}{\mathcal{L}}
\newcommand{\cD}{\mathcal{D}}
\newcommand{\cS}{\mathcal{S}}

\newcommand{\bE}{\mathbb{E}}
\newcommand{\bF}{\mathbb{F}}
\newcommand{\bN}{\mathbb{N}}
\newcommand{\bR}{\mathbb{R}}

\newcommand{\bD}{\mathbf{D}}

\newcommand{\mfu}{\mathfrak{u}}
\newcommand{\mfp}{\mathfrak{p}}
\newcommand{\mfq}{\mathfrak{q}}
\newcommand{\mfr}{\mathfrak{r}}
\newcommand{\mfc}{\mathfrak{c}}

\newcommand{\bs}[1]{{\boldsymbol{#1}}}
\newcommand{\ba}{\boldsymbol{\alpha}}
\newcommand{\bbt}{\boldsymbol{\beta}}
\newcommand{\bg}{\boldsymbol{\gamma}}
\newcommand{\bep}{\boldsymbol{\varepsilon}}

\newcommand{\bdel}{\boldsymbol{\delta}}

\def\Hep{{\mathrm{H}}}

\newtheorem{theorem}{Theorem}
\theoremstyle{plain}
\newtheorem{corollary}[theorem]{Corollary}
\newtheorem{definition}[theorem]{Definition}
\newtheorem{proposition}[theorem]{Proposition}
\newtheorem{example}[theorem]{Example}

\newtheorem{remark}[theorem]{Remark}

\setlength{\parindent}{0in} \setlength{\parskip}{0.5\baselineskip}
\setlength{\footskip}{30pt} \tolerance=1000
\setlength{\voffset}{-0.5in} \setlength{\hoffset}{-0.5in}
\setlength{\textheight}{9in} \setlength{\textwidth}{6in}

\numberwithin{equation}{section}
\numberwithin{theorem}{section}

\begin{document}

\title[Generalized Malliavin Calculus]
{A Note on Generalized Malliavin Calculus}
\author{S. V. Lototsky,  B. L. Rozovskii, and D. Sele\v{s}i}

\thanks{S. V. Lototsky acknowledges support from NSF Grant DMS-0803378.
B. L. Rozovskii acknowledges support from NSF Grant DMS-0604863, ARO
Grant W911NF-07-1-0044, and AFOSR Grant 5-21024 (inter). D. Sele\v
si acknowledges support from Project No. 144016 (Functional analysis
methods, ODEs and PDEs with singularities) financed by the Ministry
of Science, Republic of Serbia.}

\begin{abstract}
The Malliavin derivative, divergence operator, and the Ornstein-Uhlenbeck operator
 are extended from the traditional Gaussian setting to generalized processes from the higher-order chaos spaces.
\end{abstract}

\maketitle

\section{Introduction}
\label{intro}

Stochastic integration started with the construction of integrals
with respect to the Wiener process \cite{Ito-orig} and then extended to a much
larger class of processes \cite{Protter}. The Wiener process was also in the core
of the early development of Malliavin calculus \cite{Malliav-orig},
 but generalizations
so far have not been nearly as sweeping as in the theory of
stochastic integration. Currently, the \textit{driving random
source} in Malliavin calculus is an isonormal Gaussian process on a
separable Hilbert space \cite{Malliavin-b1, Nualart}. This process, similar to the standard
Wiener process, is in effect a \textit{linear} combination of a
countable collection
$\boldsymbol{\xi}:=\left\{ \xi_{i}\right\}
_{i\geq1}$ of independent standard Gaussian random variables.

A natural question to ask is whether one can extend Malliavin
calculus to \textit{nonlinear} functionals of isonormal Gaussian
process as the driving random source, and still enjoy all the
benefits of the Gaussian setting. Natural candidates for this role
are elements of the Hilbert space of square integrable functionals
of the isonormal Gaussian process. This space is often referred to
as Wiener Chaos space.

In this paper we extend the main operators of Malliavin calculus to
the space of generalized random elements $\sum_{\left\vert
\ba\right\vert
<\infty}f_{\ba}\xi_{\ba},$ where $\left\{
\xi_{\boldsymbol{\alpha}},\left\vert \ba\right\vert
<\infty \right\}  $ is the Cameron-Martin basis in the Wiener Chaos
space, $\ba$ is a multiindex and $f_{\ba}$
belong to a certain Hilbert space $X$ (for detail see Section 2). To
cover some emerging applications, we allow formal linear
combinations with infinite variance, that is $\sum_{\left\vert
\ba\right\vert <\infty}\left\Vert
f_{\ba}\right\Vert _{X}^{2}=\infty.$

Looking for solutions that are generalized random elements is quite
reasonable: after all, the Gaussian white noise that often drives
the equation of interest is itself a generalized random element. Our
interest in this subject was prompted by some recent and not so
recent developments in the stochastic partial differential equations
(SPDEs). These developments indicate that large classes of solutions
of linear and nonlinear SPDEs driven by Gaussian sources are
generalized random elements.

One example is the heat equation driven by multiplicative space-time
white noise $\dot{W}(t,x)$ with dimension of $x$ two or higher \cite{NR}:
\begin{equation}
u_{t}=\boldsymbol{\Delta}u+u\dot{W},\ u(0,x)=e^{-|x|^2}. \label{in-ex1}%
\end{equation}
Examples in one space dimension also exist:
a stochastic parabolic equation violating the parabolicity condition \cite{LR_AP}:
\begin{equation}
du=u_{xx}dt+\sigma u_{x}dw(t),\ \sigma^{2}>2, \ u(0,x)=e^{-x^2},\label{in-ex2}%
\end{equation}
or a stochastic parabolic equation of full second order \cite{LR-fs}:
\begin{equation}
du=u_{xx}dt+u_{xx}dw(t),\ u(0,x)=e^{-x^2}. \label{in-ex3}%
\end{equation}
In all three examples, $\bE\|u(t,\cdot)\|_X^2=\sum_{|\ba|<\infty} \|u_{\ba}(t,\cdot)\|_X^2=\infty$
for all $t>0$ and all typical function spaces $X$, such as Sobolev spaces.

As a different example, consider equation
\begin{equation}
-\left(  a\left(  x\right)  u_{x}(x)\right)  _{x}=f(x),\,x\in\left(
0,1\right)  ,\,u\left(  0\right)  =u\left(  1\right)  =0, \label{eleq}%
\end{equation}
with $a(x)=\bar{a}(x)+\epsilon(x),$ where $\bar{a}(x)$ is non-random
and $\epsilon(x)=\sum_{k\geq1}\sigma_{k}(x)\xi_{k}$ is a Gaussian
noise term; $\sum_{k\geq 1} \sup_{x}\sigma_k^2(x)<\infty$. Recently, this equation was investigated in the context
of uncertainty quantification for mathematical and computational
models  \cite{WRK}. As problem (\ref{eleq}) is
ill posed, one could modify it as follows:%
\begin{equation}%
\begin{array}
[c]{c}%
-\left(  \bar{a}\left(  x\right)  v_{x}(x)\right)  _{x}+\left(
\delta
_{\epsilon(x)}\left(  v_{x}\left(  x\right)  \right)  \right)  _{x}=f(x),\\
\,x\in\left(  0,1\right)  ,\,v\left(  0\right)  =v\left(  1\right)
=0,
\end{array}
\label{spde}%
\end{equation}
where $\delta_{\boldsymbol{\epsilon}(x)}$ stands for Malliavin
divergence operator (Skorokhod integral) with respect to Gaussian
noise $\epsilon(x)$. In contrast to (\ref{eleq}), equation
(\ref{spde}) is well posed and uniquely solvable, and similar to equations
\eqref{in-ex1}--\eqref{in-ex3},
$\mathbb{E}\left\Vert v\right\Vert_X^{2}=\infty$ for all traditional spaces $X$ of  functions on $(0,1)$.  Technically, the above modification of problem
(\ref{eleq}) amounts to replacement of products of random elements
by stochastic convolutions, such as  Wick products\cite{HKPS, HOUZ, Wick-orig}. In the literature on quantum physics,
procedures of this type are often called \textit{stochastic
quantization} \cite{GJ, Simon-qft}.
Equations subjected to the stochastic quantization procedure are usually
referred to as {\em quantized}.

We remark that the replacement of equation (\ref{eleq}) by equation
(\ref{spde}) also mimics the idea of It\^{o} \cite{Ito-orig} of replacing the
singular equation
\[
\dot{x}\left(  t\right)  =a\left(  x\left(  t\right)  \right)
+\sigma\left( x\left(  t\right)  \right)  \dot{w}(t)
\]
by the well posed stochastic differential equation
\[
x\left(  t\right)  =x_{0}+\int_{0}^{t}a\left(  x\left(  s\right)
\right) ds+\int_{0}^{t}\sigma\left(  x\left(  s\right)  \right)
dw(s).
\]
Because there is no natural filtration associated with elliptic
equation (\ref{spde}), the It\^{o} integral has to be replaced
by the Skorokhod integral.

Equation (\ref{spde}) differs from (\ref{eleq}) quite drastically.
While both equations are stochastic perturbations to the solution of
the deterministic equation
\begin{equation}
-\left(  \bar{a}\left(  x\right)  \bar{v}_{x}(x)\right)
_{x}=f(x),\,x\in \left(  0,1\right)  ,\,\bar{v}\left(  0\right)
=\bar{v}\left(  1\right),
=0\label{eq:det}%
\end{equation}
only the solution to the quantized equation (\ref{spde}) is an
\textit{unbiased}
perturbation of the solution of equation (\ref{eq:det}) in that $\mathbb{E}%
v\left(  x\right)  =\bar{v}\left(  x\right)  $ is a solution of
equation (\ref{eq:det}); the solution of equation
(\ref{eleq}), even if existed, would not enjoy this property.

Two other  examples of stochastic quantization are currently under investigation:
 randomly forced Burgers equation \cite{KaligLot-QBE} and Navier-Stokes equation
\cite{MikRoz-QNSE}. Let us consider Burgers equation
\begin{equation}
u_{t}=u_{xx}+uu_{x}+e^{-x^{2}}\xi, \ t>0,\ x \in \bR, \label{eq:burgers}%
\end{equation}
with a deterministic initial condition, where $\xi$ is a standard Gaussian random variable. The stochastic
quantization of this equation is
\begin{equation}
v_{t}=v_{xx}+\delta_{v}\left(  v_{x}\right)  +e^{-x^{2}}\xi,
\label{eq:Qburgers}%
\end{equation}
where $\delta_{v}\left(  v_{x}\right)  $ is the Malliavin divergence
operator (Skorokhod integral) of $v_{x}$ with respect to the
solution $v$ of (\ref{eq:Qburgers}) (and $v$ is not a Gaussian
process). It is shown \cite{KaligLot-QBE} that $\bar{v}\left(  t,x\right)
:=\mathbb{E}v\left(  t,x\right)$, with a suitable interpretation of the
 expectation, is the solution of the
deterministic Burgers
\[
\bar{v}_{t}\left(  t,x\right)  =\bar{v}_{xx}\left(  t,x\right)
+\bar {v}\left(  t,x\right)  \bar{v}_{x}\left(  t,x\right)  .
\]
Thus, as in the linear example \eqref{eleq}, the quantized version of
stochastic Burgers equation (\ref{eq:Qburgers}) is an unbiased
perturbation of the deterministic Burgers equation with the same
initial condition. The  standard stochastic Burgers equation
does not have this convenient property.
Similar effect holds for quantized Navier-Stokes equation  \cite{MikRoz-QNSE}.

We emphasize that, in all examples we have discussed, the variance of a
generalized
random element $u$ is infinite and is given by the diverging sum $\sum_{\boldsymbol{\alpha}%
}\left\Vert u_{\boldsymbol{\alpha}}\right\Vert _{X}^{2}=\infty.$
However, the rate of divergence  differs substantially from case
to case. To study this rate of divergence, we introduce a rescaling, or weighting,
operator  $\mathfrak{R}$
defined by $\mathfrak{R}\xi_{\boldsymbol{\alpha}}=r_{\boldsymbol{\alpha}}%
\xi_{\boldsymbol{\alpha}},$  where \textit{weights}
$r_{\boldsymbol{\alpha}}$ are positive numbers selected in such a
way that the weighted sum $\sum_{\ba}r_{\ba}^{2}\left\Vert
u_{\ba}\right\Vert _{X}^{2}$ becomes finite. Of course, the choice of
$r_{\ba}$ is not unique and
 depends on the specifics of the problem, for example on the
type of the stochastic PDE in question. A special case of this
 rescaling procedure originates in
quantum physics and is related to \textit{second
quantization} \cite{Simon-qft}.

Quantum physics has brought about a number of important precursors
to Malliavin calculus. For example, \textit{creation} and
\textit{annihilation} operators correspond to Malliavin divergence
and derivative operators, respectively, with respect to a single Gaussian random
variable. The original definition of Wick product \cite{Wick-orig}
is not related to the Malliavin divergence operator or Skorokhod
integral but remarkably these notions coincide in some situations.
In fact, standard Wick product could be interpreted as Skorokhod
integral with respect to square integrable processes generated by
Gaussian white noise, while the classic Malliavin divergence
operator integrates only with respect to isonormal Gaussian process.
In Section \ref{sec:gen}, we demonstrate that Malliavin divergence
operator can be extended to the setting where both the integrand
and the integrator are generalized random elements in a Hilbert
space, although  we did not try to extend Wick product in a similar way.

In this paper, we restrict ourselves to the basic study of the three
main operators in the Malliavin calculus: the derivative operator
${\mathbf{D}}$, the divergence operator $\boldsymbol{\delta}$ and
the Ornstein-Uhlenbeck operator
${\mathcal{L}}=\boldsymbol{\delta}\circ{\mathbf{D}}$. We present
constructions of ${\mathbf{D}}_{u}(v)$,
$\boldsymbol{\delta}_{u}(f)$, and ${\mathcal{L}}_{u}(v)$ when
$u,v,f$ are Hilbert space-valued generalized random elements.
Section \ref{sec:rev} reviews the main constructions of the
Malliavin calculus in the form suitable for generalizations. Section
\ref{sec:gen} presents the definitions of the Malliavin derivative,
Skorokhod integral, and Ornstein-Uhlenbeck operator in the most
general setting of weighted chaos spaces. Section \ref{sec:ex}
presents a more detailed analysis of the operators on some special
classes of spaces.

To illustrate some of the  main results, let us consider the
one-dimensional setting. Let $\xi$ be a standard normal random
variable and define
\[
\xi_{(n)}=\frac{{\mathrm{H}}_{n}(\xi)}{\sqrt{n!}},\ n\geq0,
\]
where ${\mathrm{H}}_{n}$ is $n$th Hermite polynomial. If $f$ is a
square-integrable functional of $\xi$, then
\[
f=\sum_{n\geq0}\mathbb{E}\left(  f\xi_{(n)}\right)  \xi_{(n)}.
\]
The space of square-integrable functionals of $\xi$ can thus be
identified with $\ell_{2}$:
\[
\{f_{n},\ n\geq0:\sum_{n\geq0}f_{n}^{2}<\infty\}.
\]
We define a generalized random functional $f$ of $\xi$ as a
collection of numbers $\{f_{n},\ n\geq0\}$ without any restrictions
on $f_{n}$ and a formal representation
\[
f=\sum_{k\geq0}f_{n}\xi_{(n)}.
\]
Let $u,f,v$ be generalized functionals of $\xi$ and let $p,q,r$ be
positive real numbers such that
\[
\frac{1}{p}+\frac{1}{q}=\frac{1}{r},
\]
We show in the paper that if
\[
\sum_{n\geq0}p^{n}u_{n}^{2}<\infty,\ \sum_{n\geq0}\frac{v_{n}^{2}}{r^{n}%
}<\infty,
\]
then ${\mathbf{D}}_{u}(v)$ is a generalized functional of $\xi$ such
that
\[
\big({\mathbf{D}}_{u}(v)\big)_{n}=\sum_{k=0}^{\infty}\left(  \frac
{(n+k)!}{n!k!}\right)  ^{1/2}v_{n+k}u_{k},
\]
and
\[
\sum_{n\geq1}\frac{\big({\mathbf{D}}_{u}(v)\big)_{n}^{2}}{q^{n}}<\infty,
\]
Similarly, if
\[
\sum_{n\geq0}p^{n}u_{n}^{2}<\infty,\
\sum_{n\geq0}q^{n}{f_{n}^{2}}<\infty,
\]
then $\boldsymbol{\delta}_{u}(f)$ is a generalized functional of
$\xi$ such that $\boldsymbol{\delta}_{u}(f)=u\diamond f$, where
operator $\diamond$ stands for the Wick product,
\[
\big(\boldsymbol{\delta}_{u}(f)\big)_{n}=\sum_{k=0}^{n}\left(  \frac
{n!}{k!(n-k)!}\right)  ^{1/2}f_{k}u_{n-k},
\]
and
\[
\sum_{n\geq1}r^{n}{\big(\boldsymbol{\delta}_{u}(f)\big)_{n}^{2}}<\infty.
\]
Finally, if $p,q,r$ are positive real numbers such that
\[
\left(  \frac{1}{r}-\frac{1}{p}\right)  \left(  q-\frac{1}{p}\right)
=1
\]
(for example, $p=1,\ q=2,\ r=1/2$) and
\[
\sum_{n\geq0}p^{n}u_{n}^{2}<\infty,\
\sum_{n\geq0}q^{n}{v_{n}^{2}}<\infty,
\]
then ${\mathcal{L}}_{u}(v)$ is a generalized functional of $\xi$
such that
\[
\big({\mathcal{L}}_{u}(v)\big)_{n}=\sum_{k=0}^{n}\sum_{m=0}^{\infty}%
v_{k+m}u_{n-k}u_{m}%
\]
and
\[
\sum_{n\geq0}r^{n}\big({\mathcal{L}}_{u}(v)\big)_{n}^{2}<\infty.
\]

\section{Review of the traditional Malliavin Calculus}
\label{sec:rev}

The starting point in the development of Malliavin calculus
is the {\em isonormal Gaussian process} (also known as Gaussian white noise) $\dot{W}$: a  Gaussian system $\{\dot{W}(u),\ u\in \cU\}$ indexed by a separable Hilbert space $\cU$ and  such that
$\bE \dot{W}(u)=0$, $\bE\big(\dot{W}(u)\, \dot{W}(v)\big)=(u,v)_{\cU}$.
The objective of this section is to outline a different but equivalent construction.

Let $\mathbb{F}:=\left(  \Omega,\mathcal{F},\mathbb{P}\right)$
 be a probability
space, where $\mathcal{F}$ is the $\sigma$-algebra generated by
 a collection of independent standard Gaussian random variables
 $\left\{  \xi_{i}\right\}  _{i\geq1}$.
Given a real separable Hilbert space $X$, we
denote by $L_{2}({\mathbb{F}};X)$ the Hilbert space of square-integrable
${\mathcal{F}}$-measurable $X$-valued random elements $f$.
When $X={\mathbb{R}}$, we often write
$L_{2}({\mathbb{F}})$ instead of $L_{2}({\mathbb{F}};{\mathbb{R}})$.
Finally, we fix a real separable Hilbert space ${\mathcal{U}}$ with an orthonormal basis
$\mathfrak{U}=\{\mfu_k,\ k\geq 1\}$.
\begin{definition}
\label{def:well} A {\tt  Gaussian white noise} $\dotW$ on
$\mathcal{U}$ is a formal series
\begin{equation}
\label{wn}
\dot{W}=\sum_{k\geq1}\xi_{k}
{\mathfrak{u}}_{k}.
\end{equation}
\end{definition}
Given an isonormal Gaussian process $\dotW$ and an orthonormal basis $\mathfrak{U}$ in $\cU$,
representation \eqref{wn} follows with $\xi_k=\dot{W}(\mathfrak{u}_k)$.
Conversely, \eqref{wn} defines an isonormal Gaussian process on $\cU$ by
$$
\dot{W}(u)=\sum_{k\geq 1} (u,\mathfrak{u}_k)_{\cU}\, \xi_k.
$$
To proceed, we need to review several definitions
 related to {\em multi-indices}. Let ${\mathcal{J}}$ be the
collection of multi-indices
$\boldsymbol{\alpha}=(\alpha_{1},\alpha_{2},\ldots)$ such that
$\alpha_{k}\in\{0,1,2,\ldots\}$ and $\sum_{k\geq1}\alpha_{k}<\infty$. For
$\boldsymbol{\alpha},\boldsymbol{\beta}\in{\mathcal{J}}$,
we define
\begin{equation*}
\boldsymbol{\alpha}+\boldsymbol{\beta}
=(\alpha_{1}+\beta_{1},\alpha_{2}+\beta_{2},\ldots), \  \
|\boldsymbol{\alpha}|=\sum_{k\geq1}\alpha_{k}, \ \
\boldsymbol{\alpha}!=\prod_{k\geq1}\alpha_{k}!.
\end{equation*}
 By definition, $\boldsymbol{\alpha}>0$ if
$|\boldsymbol{\alpha}|>0$ and
${\boldsymbol{\beta}}\leq\boldsymbol{\alpha}$
 if $\ {\beta}_{k}\leq {\alpha}_{k}$ for all
$k\geq1.$ If ${\boldsymbol{\beta}}\leq\boldsymbol{\alpha}$,
then
$$
\boldsymbol{\alpha}-{\boldsymbol{\beta}}
=(\alpha_{1}-\beta_{1},\alpha_{2}-\beta_{2},\ldots).
$$
Similar to the convention for the usual binomial coefficients,
$$
\binom{\boldsymbol{\alpha}}{\bbt}
=
\begin{cases}
\displaystyle \frac{\ba!}{(\ba-\bbt)!\bbt!},&\ {\rm if } \ \bbt\leq \ba,\\
0, & {\rm otherwise.}
\end{cases}
$$
We use the following notation for the special
multi-indices:
\begin{enumerate}
\item ${\boldsymbol{(0)}}$ is the multi-index with all zero
entries: ${\boldsymbol{(0)}}_{k}=0$ for all $k$;
\item  $\bep(i)$ is the multi-index of length $1$
and with the single
non-zero entry at position $i$: i.e.
$\bep(i)_k=1$ if $k=i$ and $\bep(i)_k=0$
if $k\neq i.$ We also use convention
$\bep(0)={\boldsymbol{(0)}}$.
\end{enumerate}
Given a sequence of positive numbers $\mfq=(q_1, q_2,\ldots)$
 and a real number $\ell$, we define the  sequence
$\mfq^{\ell\ba},\ \ba\in \cJ$, by
$$
\mfq^{\ba}=\prod_k q_k^{\ell\alpha_k}.
$$
In particular,
$$
 (2{\mathbb{N}})^{\ell\alpha}=\prod_{k\geq
1}(2k)^{\ell\alpha_{k}}.
$$
Next, we recall the construction of an orthonormal basis in
$L_{2}({\mathbb{F}};X)$.
Define the collection of random variables
$$
\Xi=\big\{ {\xi}_{\boldsymbol{\alpha}},\
\boldsymbol{\alpha}\in{\mathcal{J}}\big \}
$$
as follows:
$$
\xi_{\boldsymbol{\alpha}}=
\prod_{k\geq1}\frac{ \Hep_{\alpha_{k}}(\xi_{k})}{\sqrt{\alpha_{k}!}},
$$
 where $\Hep_{n}$ is the Hermite polynomial of order $n$:
$$
 \Hep_{n}(t)=(-1)^ne^{t^{2}/2}
\frac{d^{n}}{dt^{n}}e^{-t^{2}/2}.
$$
Sometimes it is convenient to work with unnormalized basis elements
$\Hep_{\ba}$, defined by
\begin{equation}
\label{Hep-a}
\Hep_{\ba}=\sqrt{\ba!} \, \xi_{\ba}.
\end{equation}
\begin{theorem}[Cameron-Martin \protect{\cite{CM}}]
\label{th:CM}
The set $\Xi$ is an orthonormal
basis in $L_{2}(\mathbb{F};X)$: if $v\in L_{2}(\mathbb{F};X)$ and
$v_{\boldsymbol{\alpha}}
=\mathbb{E}\big(v\,\xi_{\boldsymbol{\alpha}}\big)$, then
$v=\sum_{\ba\in\mathcal{J}}v_{\alpha}\xi_{\alpha}$
and $\mathbb{E}
\|v\|_X^{2}=\sum_{\alpha\in\mathcal{J}}\|v_{\alpha}\|_X^{2}$.
\end{theorem}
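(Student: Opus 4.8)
The plan is to reduce to the scalar case $X=\bR$ and split the statement into orthonormality and completeness of $\Xi$ in $L_2(\bF)$; the formula for the coefficients $v_{\ba}=\bE(v\,\xi_{\ba})$ and Parseval's identity then follow automatically. Orthonormality is the routine part. Since the $\xi_k$ are independent and any $\ba\in\cJ$ has only finitely many nonzero entries, for $\ba,\bbt\in\cJ$ I would factor
\[
\bE\big(\xi_{\ba}\,\xi_{\bbt}\big)=\prod_{k\geq1}\bE\Big(\frac{\Hep_{\alpha_k}(\xi_k)\,\Hep_{\beta_k}(\xi_k)}{\sqrt{\alpha_k!\,\beta_k!}}\Big),
\]
and invoke the classical one-variable identity $\bE\big(\Hep_m(\xi_k)\,\Hep_n(\xi_k)\big)=n!\,\delta_{mn}$ for a standard Gaussian, which makes each factor equal to $\delta_{\alpha_k\beta_k}$. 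The factors with $\alpha_k=\beta_k=0$ contribute $1$ because $\Hep_0\equiv1$, so the infinite product is well defined, and $\bE(\xi_{\ba}\xi_{\bbt})=\delta_{\ba\bbt}$.

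The substance of the theorem, and the part I expect to be the main obstacle, is completeness: that the closed linear span of $\Xi$ equals $L_2(\bF)$. I would prove the equivalent statement that any $g\in L_2(\bF)$ orthogonal to every $\xi_{\ba}$ must vanish. Because $\Hep_0,\dots,\Hep_n$ span the same space as $1,t,\dots,t^n$, the linear span of $\Xi$ is exactly the set of polynomials in finitely many of the $\xi_k$, so $g$ is orthogonal to all such polynomials. The key step is to upgrade this to orthogonality against the exponentials $\exp\!\big(\sum_{k=1}^n t_k\xi_k\big)$: using the generating identity $\exp(t\xi-t^2/2)=\sum_{m\geq0}(t^m/m!)\Hep_m(\xi)$ together with $L_2$-convergence of this series (which follows from the orthogonality bound above), I would obtain $\bE\big(g\,\exp(\sum_{k=1}^n t_k\xi_k)\big)=0$ for every $n$ and all real $t_1,\dots,t_n$. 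For fixed $n$ this asserts the vanishing of the analytic Laplace transform of the finite signed measure with density $\bE(g\mid\xi_1,\dots,\xi_n)$ against the standard Gaussian measure on $\bR^n$, forcing $\bE(g\mid\xi_1,\dots,\xi_n)=0$ a.s. Since $\cF=\sigma\big(\bigcup_n\sigma(\xi_1,\dots,\xi_n)\big)$, the martingale convergence theorem then yields $g=\bE(g\mid\cF)=\lim_n\bE(g\mid\xi_1,\dots,\xi_n)=0$.

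Finally, I would transfer the scalar result to a general separable Hilbert space $X$ by scalarization. For $v\in L_2(\bF;X)$ set $v_{\ba}=\bE(v\,\xi_{\ba})\in X$, interpreted as a Bochner integral. For each $x\in X$ the real functional $(v,x)_X$ lies in $L_2(\bF)$ and has $\ba$-th coefficient $(v_{\ba},x)_X$, so applying the scalar expansion and Parseval identity to $(v,x)_X$ and letting $x$ range over an orthonormal basis of $X$ gives $v=\sum_{\ba\in\cJ}v_{\ba}\,\xi_{\ba}$ in $L_2(\bF;X)$ together with $\bE\|v\|_X^2=\sum_{\ba\in\cJ}\|v_{\ba}\|_X^2$. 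The delicate points all sit in the previous paragraph, specifically the term-by-term passage from polynomials to exponentials inside the $L_2$ inner product and the Laplace-transform uniqueness step; the orthonormality computation and this vector-valued extension are essentially bookkeeping.
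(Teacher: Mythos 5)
Your proposal is correct. Note, however, that the paper itself gives no proof of this statement: it is quoted as the classical Cameron--Martin theorem with a citation to \cite{CM} (and the standard modern reference is \cite[Theorem 1.1.1]{Nualart}), so there is no in-paper argument to compare against. Your argument is essentially the standard textbook proof: orthonormality via independence and $\mathbb{E}\big(\Hep_m(\xi)\Hep_n(\xi)\big)=n!\,\delta_{mn}$; completeness by showing that $g\perp\Xi$ forces $\mathbb{E}\big(g\exp(\sum_{k\le n}t_k\xi_k)\big)=0$ through the $L_2$-convergent generating-function expansion, then killing $\mathbb{E}(g\mid\xi_1,\dots,\xi_n)$ by uniqueness of the (analytic) Laplace/Fourier transform and finishing with martingale convergence, since $\mathcal{F}=\sigma(\xi_1,\xi_2,\dots)$; and finally scalarization to pass to $X$-valued $v$. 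All the steps you flag as delicate (term-by-term pairing of $g$ against the exponential series, and the transform-uniqueness step, most cleanly justified by analytic continuation to purely imaginary arguments) are handled correctly, so the proof stands.
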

If the space $\cU$ is finite-dimensional, then the multi-indices are
restricted to the the set
$$
\cJ_n=\{\ba\in \cJ: \alpha_k=0,\, k>n\}.
$$
The three main operators of the Malliavin calculus are
\begin{enumerate}
\item The   {\tt (Malliavin) derivative}
 $\mathbf{D}_{\dot{W}}$;
 \item    The {\tt divergence operator} $\bdel_{\dot{W}}$, also
 known as the {\tt Skorokhod integral};
\item  The {\tt Ornstein-Uhlenbeck operator} $\cL_{\dot{W}}= \bdel_{\dot{W}} \, \bD_{\dot{W}}$.
\end{enumerate}
For reader's convenience, we summarize the main properties of
 $\mathbf{D}_{\dot{W}}$ and ${\boldsymbol{\delta}}_{\dot{W}}$;
 all the details are in  \cite[Chapter 1]{Nualart}.
 \begin{enumerate}
 \item  $\mathbf{D}_{\dot{W}}$ is a closed unbounded linear operator
 from $L_{2}({\mathbb{F}};X)$ to $L_{2}({\mathbb{F}};X\otimes \cU)$;
 the domain of $\mathbf{D}_{\dot{W}}$ is denoted by  ${\mathbb{D}}^{1,2}(\bF;X)$;
 \item If $v=F\big(\dot{W}(h_1),\ldots,\dot{W}(h_n)\big)$ for a
 polynomial $F=F(x_1, \ldots, x_n)$ and $h_1,\ldots, h_n\in X$, then
 \begin{equation}
 \label{MD-def}
 \mathbf{D}_{\dot{W}}(v)=\sum_{k=1}^n \frac{\partial F}{\partial x_k}\big(\dot{W}(h_1),\ldots,\dot{W}(h_n)\big)\,h_k.
 \end{equation}
 \item  ${\boldsymbol{\delta}}_{\dot{W}}$ is the adjoint of $\mathbf{D}_{\dot{W}}$ and is a closed unbounded linear operator
 from $L_{2}({\mathbb{F}};X\otimes \cU)$ to $L_{2}({\mathbb{F}};X)$
 such that
 \begin{equation}
\mathbb{E}\big(\varphi{\boldsymbol{\delta}}_{\dot{W}}(f)\big)
= {\mathbb{E}}\big(f,{\mathbf{D}}_{\dot{W}}(\varphi)\big)_{{\mathcal{U}}}
 \label{eq:SkI}       %
\end{equation}
for all $\varphi\in{\mathbb{D}}^{1,2}({\mathbb{F};\bR})$ and
$f\in{\mathbb{D}}^{1,2}({\mathbb{F};X\otimes \cU})$. Equivalently,
\begin{equation}
\big(v,{\boldsymbol{\delta}}_{\dot{W}}(f)\big)_{L_2(\bF;X)}
= \big(f,{\mathbf{D}}_{\dot{W}}(v)\big)_{L_2(\bF;X\otimes \cU)}
 \label{eq:SkI-ip}       %
\end{equation}
for all $v\in{\mathbb{D}}^{1,2}({\mathbb{F};X})$ and
$f\in{\mathbb{D}}^{1,2}({\mathbb{F};X\otimes \cU})$.
\end{enumerate}
We will need representations of the operators ${\mathbf{D}}_{\dot{W}}$,
${\boldsymbol{\delta}}_{\dot{W}}$, and $\cL_{\dot{W}}$ in the
basis $\Xi$.
\begin{theorem}
\label{prop:MC-orig}

(1) If  $v\in L_2(\bF;X)$ and
\begin{equation}
\label{MD-cond}
\sum_{\ba \in \cJ} |\ba|\, \|v_{\ba}\|_X^2 <\infty,
\end{equation}
then $\mathbf{D}_{\dot{W}}(v)\in L_2(\bF;X\otimes \cU)$ and
\begin{equation}
\label{MD-wn}
\mathbf{D}_{\dot{W}}(v)=\sum_{\ba\in \cJ} \sum_{k\geq 1}
\sqrt{\alpha_{k}}\,\xi_{\ba-\bep(k)}\,
 v_{\ba}\otimes{\mathfrak{u}}_{k}.
\end{equation}
(2) If
$$
f=\sum_{\ba\in \cJ,\,k\geq 1} f_{k,\ba}\otimes \mfu_k\,
\xi_{\ba},
$$
and
\begin{equation}
\label{DO-cond}
\sum_{\ba\in \cJ,\,k\geq 1} |\ba|\|f_{k,\ba}\|^2_X<\infty,
\end{equation}
then
$\bdel_{\dot{W}}(f)\in L_2(\mathbb{F};X)$ and
\begin{equation}
\label{DO-wn}
\bdel_{\dot{W}}(f)=\sum_{\ba\in \cJ} \left(\sum_{k\geq 1}
\sqrt{\alpha_k} f_{k,\ba-\bep(k)}\right)\xi_{\ba}.
\end{equation}
(3) If  $v\in L_2(\bF;X)$ and
\begin{equation}
\label{OU-cond}
\sum_{\ba \in \cJ} |\ba|^2\, \|v_{\ba}\|_X^2 <\infty,
\end{equation}
then $\cL_{\dot{W}}(v) \in  L_2(\bF;X) $ and
\begin{equation}
\label{OU-wn}
\cL_{\dot{W}}(v)=\sum_{\ba\in \cJ} |\ba|v_{\ba}\, \xi_{\ba}.
\end{equation}
\end{theorem}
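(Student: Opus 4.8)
The plan is to prove the three parts in order, with part (1) serving as the engine for parts (2) and (3). Everything reduces to the single Hermite identity $\Hep_n'(t)=n\,\Hep_{n-1}(t)$, which on the normalized basis elements reads $\frac{d}{dt}\big(\Hep_n(t)/\sqrt{n!}\big)=\sqrt{n}\,\Hep_{n-1}(t)/\sqrt{(n-1)!}$. For part (1), I would first establish \eqref{MD-wn} on an individual basis element $\xi_{\ba}$. Since $\xi_{\ba}=\prod_k \Hep_{\alpha_k}(\xi_k)/\sqrt{\alpha_k!}$ is a polynomial in the finitely many $\xi_k=\dotW(\mfu_k)$ with $\alpha_k>0$, I can apply the defining formula \eqref{MD-def} with $h_k=\mfu_k$; differentiating the product in the $x_k$ slot and invoking the Hermite identity yields exactly $\bD_{\dotW}(\xi_{\ba})=\sum_{k\geq1}\sqrt{\alpha_k}\,\xi_{\ba-\bep(k)}\otimes\mfu_k$, a finite sum. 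Extending to the truncations $v^{(N)}=\sum_{|\ba|\leq N}v_{\ba}\xi_{\ba}$ by linearity, I would compute $\|\bD_{\dotW}(v^{(N)})\|^2$ in $L_2(\bF;X\otimes\cU)$ by regrouping according to the shifted index $\bbt=\ba-\bep(k)$: the coefficient of $\xi_{\bbt}\otimes\mfu_k$ is $\sqrt{\beta_k+1}\,v_{\bbt+\bep(k)}$, and reindexing back via $\ba=\bbt+\bep(k)$ collapses $\sum_{k:\alpha_k\geq1}\alpha_k$ into $|\ba|$, giving $\|\bD_{\dotW}(v^{(N)})\|^2=\sum_{|\ba|\leq N}|\ba|\,\|v_{\ba}\|_X^2$. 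Thus \eqref{MD-cond} is precisely the statement that the truncations are Cauchy, and since $\bD_{\dotW}$ is closed, $v$ lies in its domain and \eqref{MD-wn} holds with $L_2$-convergence.

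For part (2), rather than computing $\bdel_{\dotW}(f)$ directly, I would read off its coefficients from the adjoint relation \eqref{eq:SkI-ip}. Pairing against an arbitrary finite $v=\sum_{\bbt}v_{\bbt}\xi_{\bbt}$ and substituting the part (1) formula for $\bD_{\dotW}(v)$, the inner product in $L_2(\bF;X\otimes\cU)$ survives only where both the Hermite index and the basis vector of $\cU$ match; after the shift $\ba=\bbt-\bep(k)$ this becomes $\sum_{\bbt}\big(v_{\bbt},\,\sum_{k}\sqrt{\beta_k}\,f_{k,\bbt-\bep(k)}\big)_X$. Matching the coefficient of each $v_{\bbt}$ then yields \eqref{DO-wn}. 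To confirm $\bdel_{\dotW}(f)\in L_2(\bF;X)$, I would bound each coefficient by Cauchy–Schwarz, $\big\|\sum_k\sqrt{\beta_k}\,f_{k,\bbt-\bep(k)}\big\|_X^2\leq|\bbt|\sum_k\|f_{k,\bbt-\bep(k)}\|_X^2$, and reindex; together with $f\in L_2$ this is dominated by $\sum_{\ba,k}(|\ba|+1)\|f_{k,\ba}\|_X^2$, which is finite under \eqref{DO-cond}.

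For part (3), I would simply compose $\cL_{\dotW}=\bdel_{\dotW}\,\bD_{\dotW}$. Applying part (1) to $v$ identifies the coefficients $g_{k,\bg}=\sqrt{\gamma_k+1}\,v_{\bg+\bep(k)}$ of $\bD_{\dotW}(v)$, and feeding them into \eqref{DO-wn} gives $(\cL_{\dotW}(v))_{\bbt}=\sum_k\sqrt{\beta_k}\,g_{k,\bbt-\bep(k)}$. Since $g_{k,\bbt-\bep(k)}=\sqrt{\beta_k}\,v_{\bbt}$, the two square-root factors multiply to $\beta_k$ and the sum over $k$ telescopes to $|\bbt|\,v_{\bbt}$, which is \eqref{OU-wn}; the norm is then $\sum_{\bbt}|\bbt|^2\|v_{\bbt}\|_X^2$, finite by \eqref{OU-cond}. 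The one point requiring care is that the composition is legitimate, i.e. that $\bD_{\dotW}(v)$ meets the domain condition \eqref{DO-cond}; a short reindexing shows the relevant sum equals $\sum_{\ba}(|\ba|-1)|\ba|\,\|v_{\ba}\|_X^2\leq\sum_{\ba}|\ba|^2\|v_{\ba}\|_X^2$, again controlled by \eqref{OU-cond}.

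I expect the main obstacle to be bookkeeping rather than analysis: every step hinges on correctly shifting multi-indices by $\bep(k)$ and on reindexing the double sums over $\ba$ and $k$, and on justifying the interchange of the infinite summations together with the passage from the finite truncations to the limit through the closedness of the operators. Once the Hermite identity and the index shift $\ba\leftrightarrow\ba-\bep(k)$ are in place, the analytic content is entirely elementary.
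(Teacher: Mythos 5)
Your proposal is correct and follows essentially the same route as the paper: establish the action on the basis elements $\xi_{\ba}$ via the polynomial formula \eqref{MD-def} and the Hermite derivative identity, obtain the divergence coefficients from the adjoint relation, compose the two for the Ornstein--Uhlenbeck operator, and then pass to general $v$ (or $f$) by linearity and Parseval's identity in the Cameron--Martin basis. The only difference is that you spell out a few details the paper leaves implicit -- the closedness argument for the limit of truncations, the Cauchy--Schwarz bound giving $L_2$-membership of $\bdel_{\dot W}(f)$, and the verification that $\mathbf{D}_{\dot W}(v)$ satisfies \eqref{DO-cond} so the composition in part (3) is legitimate -- all of which are accurate.
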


\begin{proof} Linearity of the operators implies that, in each case, it is enough to
find the image of $\xi_{\ba}$.

(1)  Using \eqref{MD-def} and properties of the Hermite polynomials,
 for every $\ba\in{\mathcal{J}},$
\begin{equation}
{\mathbf{D}}_{\dot{W}}(\xi_{\ba})
=\sum_{k\geq 1}\sqrt{\alpha_{k}}\,\xi_{\ba-\bep(k)}\,{\mathfrak{u}}_{k}.
\label{eq:Dxi}
\end{equation}
Orthonormality of $\{\xi_{\ba},\ \ba\in \cJ\}$
and $\{\mfu_k,\ k\geq 1\}$ then implies
\begin{equation*}
\begin{split}
\bE \|\mathbf{D}_{\dot{W}}(v)\|_{X\otimes \cU}^2 &=
\sum_{\ba,\bbt,k,n} \sqrt{\alpha_k\, \beta_n}\,
\bE\big(\xi_{\ba-\bep(k)}\xi_{\bbt-\bep(n)}\big)
(v_{\ba},v_{\bbt})_X\, (\mfu_k,\mfu_n)_{\cU}\\
& = \sum_{\ba,k}\alpha_k\|v_{\ba}\|_X^2 =
\sum_{\ba} |\ba|\, \|v_{\ba}\|_X^2.
\end{split}
\end{equation*}
(2) By \eqref{eq:Dxi} and \eqref{eq:SkI},
for every $\xi_{\alpha}\in \Xi$, $h\in X$,
and $\mfu_{k}\in \mathfrak{U}$,
\begin{equation}
{\boldsymbol{\delta}}_{\dot{W}}(h\otimes{\mathfrak{u}}_{k}\,\xi_{\alpha})
=h\,\sqrt{\alpha_{k}+1}\,\xi_{\alpha+\varepsilon_{k}}.
 \label{eq:Ixi}       %
\end{equation}
(3) By \eqref{eq:Dxi} and \eqref{eq:Ixi},
for every $\xi_{\ba}$,
\begin{equation}
\label{OU-basis}
\cL_{\dotW}(\xi_{\ba})=
\bdel_{\dot{W}} \Big(\bD_{\dot{W}} (\xi_{\ba}) \Big) =
|\ba| \xi_{\ba}.
\end{equation}
\end{proof}

\begin{remark}
\label{rem:main} {\rm
Here is an important technical difference between the derivative and the
divergence operators:
\begin{itemize}
\item  For the operator $\mathbf{D}_{\dot{W}}$,
\begin{equation}
\label{aux1}
\big(\mathbf{D}_{\dot{W}}(v)\big)_{\ba}=
\sum_{k\geq 1}
\sqrt{\alpha_k+1}\, v_{\ba+\bep(k)} \otimes \mfu_k;
\end{equation}
in general, the sum on the right-hand side contains infinitely many terms and will
diverge without additional conditions on $v$, such as \eqref{MD-cond}.
 \item For the operator $\bdel_{\dot{W}}$,
\begin{equation}
\label{DO-aux1}
\big(\bdel_{\dot{W}}(f)\big)_{\ba}=
\sum_{k\geq 1} \sqrt{\alpha_k} f_{k,\ba-\bep(k)};
\end{equation}
 the sum on the right-hand side always contains
finitely many terms, because only finitely many of
$\alpha_k$ are not equal to zero. Thus, for {\em fixed} $\ba$,
$\big(\bdel_{\dot{W}}(f)\big)_{\ba}$ is defined without any additional
conditions on $f$.
\end{itemize}
}
\end{remark}

\section{Generalizations to weighted chaos spaces}
\label{sec:gen}

Recall that  $\dot{W}$, as defined by \eqref{wn}, is not a $\cU$-valued random element,
 but  a {\em generalized}  random element on $\cU$:
\begin{equation}
\label{wn1}
\dot{W}(h)=\sum_{k\geq 1} (h,\mfu_k)_{\cU} \, \xi_k,
\end{equation}
where the series on the right-hand side converges with probability
one for every $h\in \cU$.
The objective of this section is to find similar interpretations
of the series in \eqref{MD-wn},
\eqref{DO-wn}, and \eqref{OU-wn} if the corresponding conditions \eqref{MD-cond},
\eqref{DO-cond}, \eqref{OU-cond} fail. Along the way,
it also becomes natural to allow other generalized random
elements to replace $\dot{W}$.

We start with the construction of {\em weighted chaos spaces}.
Let ${\mathcal{R}}$ be a bounded linear operator on $L_{2}({\mathbb{F}})$
defined by ${\mathcal{R}}\xi_{\ba}=r_{\ba}\xi_{\ba}$ for every
$\ba\in{\mathcal{J}}$, where the \emph{weights} $\{r_{\ba},\ \ba
\in{\mathcal{J}}\}$ are positive numbers.

Given a Hilbert space $X$,
we extend ${\mathcal{R}}$ to an operator on $L_{2}({\mathbb{F}};X)$ by
defining ${\mathcal{R}}f$ as the unique element of $L_{2}({\mathbb{F}};X)$ so
that, for all $g\in L_{2}(\mathbb{F};X)$,
\[
{\mathbb{E}}({\mathcal{R}}f,g)_{X}
= \sum_{\ba\in{\mathcal{J}}}r_{\ba
}{\mathbb{E}}\big((f,g)_{X}\xi_{\ba}\big).
\]
Denote by ${\mathcal{R}}L_{2}({\mathbb{F}};X)$ the closure of $L_{2}       %
({\mathbb{F}};X)$ with respect to the norm
\[
\Vert f\Vert_{{\mathcal{R}}L_{2}({\mathbb{F}};X)}^{2}
:=\Vert{\mathcal{R}}f\Vert_{L_{2}({\mathbb{F}};X)}^{2}.
\]

In what follows, we will identify the operator ${\mathcal{R}}$ with the
corresponding collection $(r_{\ba},\ \ba\in{\mathcal{J}})$. Note that if
$u\in{\mathcal{R}}_{1}L_{2}({\mathbb{F}};X)$ and $v\in{\mathcal{R}}_{2}       %
L_{2}({\mathbb{F}};X)$, then both $u$ and $v$ belong to ${\mathcal{R}}       %
L_{2}({\mathbb{F}};X)$, where $r_{\ba}=\min(r_{1,\ba},r_{2,\ba})$. As
usual, the argument $X$ will be omitted if $X={\mathbb{R}}$.

Important particular cases of ${\mathcal{R}}L_{2}({\mathbb{F}};X)$
are
\begin{enumerate}
\item The {\tt sequence spaces} $L_{2,\mfq}({\mathbb{F}};X)$, corresponding to the weights
\[
r_{\ba}= \mfq^{\ba},
\]
where $\mfq=\left\{  q_{k},\, k \geq1\right\}  $ is a sequence of
positive numbers; see \cite{LR-spn,LR_AP,NR}. Given a real number $p$,  one can
also consider the spaces
\begin{equation}
\label{Lpq}
L^p_{2,\mfq}({\mathbb{F}};X)=L_{2,\mfq^p}({\mathbb{F}};X),
\end{equation}
where $\mfq^p=\left\{  q_{k}^p,\, k \geq1\right\}.  $ In particular,
$L^1_{2,\mfq}=L_{2,\mfq}; L^{-1}_{2,\mfq}=L_{2,1/\mfq}$.
Under the additional assumption $q_k\geq 1$
 we have, similar to the usual Sobolev spaces,
$$
L^p_{2,\mfq}({\mathbb{F}};X)\subset L^r_{2,\mfq}({\mathbb{F}};X),\
p>r.
$$
\item The  {\tt Kondratiev  spaces}
 $({\mathcal{S}})_{\rho,\ell}(X)$, corresponding to the weights
\begin{equation}
r_{\ba}=(\ba!)^{\rho/2}(2{\mathbb{N}})^{\ell\ba},\ \rho \in [-1,1],\
 \ell\in \bR,\ \ \label{eq:S-weight}       %
\end{equation}
see \cite{HOUZ}.
\end{enumerate}
There is a natural duality between $L_{2,\mfq}(X)$ and $L^{-1}_{2,\mfq}(X)$:
\begin{equation}
\label{dual-q}
\langle u,v \rangle_{\mfq}=\sum_{\ba\in\cJ}(u_{\ba},v_{\ba})_X;
\end{equation}
there is a natural duality between $(\cS)_{\rho,\ell}(X)$ and $(\cS)_{-\rho,-\ell}(X)$:
\begin{equation}
\label{dual-ks}
\langle u,v \rangle_{\rho,\ell}=\sum_{\ba\in\cJ}(u_{\ba},v_{\ba})_X.
\end{equation}
Both \eqref{dual-q} and \eqref{dual-ks} extend the notion of
$\bE (u,v)_X$ to generalized $X$-valued random elements.

Taking projective and injective limits of weighted spaces leads to constructions
similar to the Schwartz spaces $\mathcal{S}(\bR^d)$ and $\mathcal{S}'(\bR^d)$. Of special interest are
\begin{enumerate}
\item The {\tt power sequence} spaces
\begin{equation}
\label{power-seq-limit}
L_{2,\mfq}^{+}({\mathbb{F}};X)
=\bigcap_{p\in \bR}L^p_{2,\mfq}({\mathbb{F}};X) ,\ \
L_{2,\mfq}^{-}({\mathbb{F}};X)
=\bigcup_{p\in \bR}L^p_{2,\mfq}({\mathbb{F}};X),
\end{equation}
where $\mfq=\{q_k,\,k\geq 1\}$ is a sequence with $q_k\geq 1$
(see \eqref{Lpq}).
\item The spaces $\mathcal{S}^{\rho}(X)$
and $\mathcal{S}_{-\rho}(X)$, $0\leq \rho \leq 1$ of {\tt Kondratiev test functions and distributions}:
\begin{equation}
\label{Kond-limit}
\mathcal{S}^{\rho}(X)=\bigcap_{\ell\in \bR}({\mathcal{S}})_{\rho,\ell}(X),\ \mathcal{S}_{-\rho}(X)=\bigcup_{\ell \in \bR}({\mathcal{S}})_{-\rho,\ell}(X).
\end{equation}
In this regard we mention that, in the traditional white noise setting,
$X=\bR^d$, $\rho=0$ corresponds to the Hida spaces, and the
term {\em Kondratiev spaces} is usually reserved for $\mathcal{S}^{1}(\bR^d)$
and $\mathcal{S}_{-1}(\bR^d)$.
\end{enumerate}
If the space $\cU$ is finite-dimensional, then the sequence $\mfq$ can be taken
finite, with as many elements as the dimension of $\cU$. In this case, certain  Kondratiev spaces are bigger than any sequence space.

\begin{proposition} If $\cU$ is finite-dimensional, then
\begin{equation}
\label{PK-inclus}
L_{2,\mfq}(X)\subset ({\mathcal{S}})_{-\rho,-\ell}(X)
\end{equation}
for every $\rho>0$, $\ell\geq \rho$ and every $\mfq$.
\end{proposition}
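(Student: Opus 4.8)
The plan is to reduce the claimed inclusion \eqref{PK-inclus} to an elementary pointwise comparison of the two defining weights. Since the norm on $L_{2,\mfq}(X)$ is $\sum_{\ba}\mfq^{2\ba}\|f_{\ba}\|_X^2$ and the norm on $(\cS)_{-\rho,-\ell}(X)$ is $\sum_{\ba}(\ba!)^{-\rho}(2\bN)^{-2\ell\ba}\|f_{\ba}\|_X^2$ (read off from \eqref{eq:S-weight} with $\rho,\ell$ negated), the embedding will follow, continuously, once I produce a constant $C$ with
\[
(\ba!)^{-\rho}(2\bN)^{-2\ell\ba}\le C\,\mfq^{2\ba},\qquad \ba\in\cJ.
\]
Multiplying by $\|f_{\ba}\|_X^2$ and summing then gives $\|f\|_{(\cS)_{-\rho,-\ell}(X)}^2\le C\,\|f\|_{L_{2,\mfq}(X)}^2$, so the whole task is to bound the ratio $\theta_{\ba}:=(\ba!)^{-\rho}(2\bN)^{-2\ell\ba}\mfq^{-2\ba}$ uniformly in $\ba$.

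First I would invoke finite-dimensionality: if $\dim\cU=n$, then every relevant multi-index lies in $\cJ_n$, so only $q_1,\dots,q_n$ enter and all three weights factor over $k=1,\dots,n$. Setting $c_k:=(2k)^{-2\ell}q_k^{-2}>0$, this turns the ratio into the finite product
\[
\theta_{\ba}=\prod_{k=1}^{n}(\alpha_k!)^{-\rho}\,c_k^{\alpha_k},
\]
in which the factors depend on disjoint coordinates, so that $\sup_{\ba\in\cJ_n}\theta_{\ba}=\prod_{k=1}^{n}\bigl(\sup_{m\ge 0}(m!)^{-\rho}c_k^{m}\bigr)$, each factor being nonnegative.

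The heart of the matter is the one-variable bound $M_k:=\sup_{m\ge 0}(m!)^{-\rho}c_k^{m}<\infty$, and here the hypothesis $\rho>0$ is what does the work. By Stirling, $\log\bigl((m!)^{-\rho}c_k^{m}\bigr)=-\rho\,m\log m+O(m)\to-\infty$, so $(m!)^{-\rho}c_k^{m}\to 0$ as $m\to\infty$; being a nonnegative sequence tending to zero, it attains a finite maximum $M_k$, for each fixed $c_k>0$. Taking the finite product, $\sup_{\ba}\theta_{\ba}\le\prod_{k=1}^{n}M_k=:C<\infty$, which is precisely the weight comparison sought, and the proof is complete.

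I expect the only genuine obstacle to be conceptual rather than computational: the argument collapses exactly because $n<\infty$ makes $\prod_{k}M_k$ a \emph{finite} product. In infinite dimensions one typically has $M_k\ge 1$ for each $k$, so the product may diverge, and the super-exponential gain from $(\ba!)^{-\rho}$ must then be weighed against the growth of $(2\bN)^{-2\ell\ba}$ — which is where a relation of the form $\ell\ge\rho$ would enter. In the finite-dimensional setting of this proposition that balancing is automatic, and the essential hypotheses driving the estimate are simply $\rho>0$ together with $\dim\cU<\infty$; the particular sequence $\mfq$ and the sign of $\ell$ are then immaterial.
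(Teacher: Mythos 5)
Your proof is correct, and it takes a somewhat different route from the paper's. The paper first replaces $\mfq$ by the constant sequence $\mfr=(r,\dots,r)$ with $r=\min\{q_1,\dots,q_n\}$ (this is where finite-dimensionality enters there), and then compares weights through the total degree, using the inequality $(2\mathbb{N})^{2\ba}\,\ba!\geq |\ba|!$ to reduce everything to the one-variable bound $r^{-2|\ba|}(|\ba|!)^{-\rho}\leq C(r)$; the hypothesis $\ell\geq\rho$ is used to pass from $(\cS)_{-\rho,-\rho}$ to $(\cS)_{-\rho,-\ell}$. You instead factor the weight ratio coordinate-by-coordinate over the $n$ active indices and bound each factor by $M_k=\sup_{m}(m!)^{-\rho}c_k^m<\infty$, with finite-dimensionality entering only through the finiteness of the product $\prod_{k=1}^n M_k$. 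Both arguments rest on the same engine --- $(m!)^{\rho}$ dominating any geometric sequence when $\rho>0$ --- but your version avoids the combinatorial inequality relating $\ba!$ to $|\ba|!$ and, as you correctly observe, shows that in the finite-dimensional case the hypothesis $\ell\geq\rho$ (indeed any condition on $\ell$) is superfluous, so you actually prove a slightly stronger statement than the one asserted. Your closing remark about why the argument breaks down in infinite dimensions is also consistent with the paper's own discussion following the proposition.
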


\begin{proof}
Let $n$ be the dimension of $\cU$ and
$r=\min\{q_1,\ldots, q_n\}$. Define $\mfr=\{r,\ldots, r\}$.
Then $L_{2,\mfq}(X)\subset L_{2,\mfr}(X)$.
On the other hand, for all $\ba\in \cJ$,
$(2\mathbb{N})^{2}\ba!\geq |\ba|!$,
$$
(r^{2|\ba|}(\ba!)^{\rho}(2\mathbb{N})^{2\rho})^{-1}\leq
(r^{2|\ba|}(|\ba|!)^{\rho})^{-1}\leq C(r)
$$
and therefore $L_{2,\mfr}(X)\subset ({\mathcal{S}})_{-\rho,-\ell}(X)$.
\end{proof}

Analysis of the proof shows that, in general, an inclusion of the type
\eqref{PK-inclus} is possible if and only if there is a uniform in $\ba$ bound
of the type  $(\mfq^{2\ba}(|\ba|!)^{\rho})^{-1}\leq C(2\mathbb{N})^{p\ba}$;
the constants $C$ and $p$ can depend on the sequence $\mfq$. If the space
$\cU$ is infinite-dimensional, then such a bound may exist for certain
sequences $\mfq$ (such as  $\mfq=\mathbb{N}$), and may fail to
exist for other sequences (such as $\mfq=\exp(\mathbb{N}$). Thus,
both $L_{2,\mfq}(X)$ and $({\mathcal{S}})_{-\rho,\ell}(X)$
can be of interest in the study of stochastic differential equations.

\begin{definition}
A {\tt generalized $X$-valued random element} is an element of the set
$\bigcup \cR L_2(\bF;X)$, with the union taken over all weight sequences $\cR$.
\end{definition}

To complete the discussion of weighted spaces, we need the
 following  results about multi-indexed series.
\begin{proposition}
\label{prop:conv-ps}
Let $\mfr=\{r_k,\, k\geq 1\}$ be a sequence of positive numbers.

(1) If $\sum_{k\geq 1} r_k<\infty$, then
\begin{equation}
\label{exp-sum}
\sum_{\ba\in \cJ} \frac{\mfr^{\ba}}{\ba!} = \exp\left(\sum_{k\geq 1} r_k\right).
\end{equation}

(2) If $\sum_{k\geq 1} r_k  < \infty$ and $r_k<1$ for all $k$, then,
for every $\ba\in \cJ$,
\begin{equation}
\label{geom-sum}
\sum_{\bbt\in \cJ} \binom{\ba+\bbt}{\bbt}\mfr^{\bbt}=\left(\prod_{k\geq 1} \frac{1}{1-r_k}\right)\,(1-\mfr)^{-\ba},
\end{equation}
where $1-\mfr$ is the sequence $\{1- r_k,\ k\geq 1\}$.
In particular,
\begin{equation}
\label{geom-sum1}
\sum_{\ba\in \cJ}(2\bN)^{-\ell\ba}<\infty
\end{equation}
for all $\ell>1$; cf. \cite[Proposition 2.3.3]{HOUZ}.

(3) For every $\ba\in \cJ$,
\begin{equation}
\label{binom-sum}
\sum_{\bbt\in \cJ} \binom{\ba}{\bbt}\mfr^{\bbt}=(1+\mfr)^{\ba},
\end{equation}
where $1+\mfr$ is the sequence $\{1+ r_k,\ k\geq 1\}$.
\end{proposition}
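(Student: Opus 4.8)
The plan is to exploit the multiplicative structure shared by every quantity in the statement. Since $\mfr^{\ba}=\prod_k r_k^{\alpha_k}$, $\ba!=\prod_k\alpha_k!$, and the binomial coefficients factor as $\binom{\ba}{\bbt}=\prod_k\binom{\alpha_k}{\beta_k}$ and $\binom{\ba+\bbt}{\bbt}=\prod_k\binom{\alpha_k+\beta_k}{\beta_k}$, each summand in the three series is itself a product over the coordinate index $k$. Thus each sum over $\cJ$ rewrites as an infinite product of one-dimensional sums, and every identity reduces to a scalar power-series identity applied coordinatewise. The technical lemma I would isolate first is this: if $(a_{k,m})_{k\geq1,\,m\geq0}$ are nonnegative with $a_{k,0}=1$ for every $k$, then $\sum_{\ba\in\cJ}\prod_k a_{k,\alpha_k}=\prod_k\big(\sum_{m\geq0}a_{k,m}\big)$, both sides finite simultaneously. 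This follows from Tonelli's theorem, since all terms are nonnegative, together with the observation that for each fixed $\ba\in\cJ$ the product $\prod_k a_{k,\alpha_k}$ has only finitely many factors different from $1$, so the passage to an infinite product is legitimate.

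For part (1), writing $\mfr^{\ba}/\ba!=\prod_k r_k^{\alpha_k}/\alpha_k!$ gives $a_{k,m}=r_k^m/m!$ with $a_{k,0}=1$ and $\sum_{m\geq0}a_{k,m}=\ue^{r_k}$, so the lemma yields $\sum_{\ba}\mfr^{\ba}/\ba!=\prod_k\ue^{r_k}=\exp(\sum_k r_k)$, finite exactly because $\sum_k r_k<\infty$. For part (2), fix $\ba$ and set $b_{k,m}=\binom{\alpha_k+m}{m}r_k^m$; again $b_{k,0}=1$, and the negative-binomial series $\sum_{m\geq0}\binom{n+m}{m}x^m=(1-x)^{-(n+1)}$, valid for $|x|<1$, gives $\sum_{m\geq0}b_{k,m}=(1-r_k)^{-(\alpha_k+1)}$. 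Multiplying over $k$ and separating the factor $(1-r_k)^{-1}$, present for every $k$, from $(1-r_k)^{-\alpha_k}$, nontrivial only for the finitely many $k$ with $\alpha_k>0$, produces exactly $\big(\prod_k(1-r_k)^{-1}\big)(1-\mfr)^{-\ba}$; the product $\prod_k(1-r_k)^{-1}$ converges because $\sum_k r_k<\infty$. The special case \eqref{geom-sum1} follows by taking $\ba=\boldsymbol{(0)}$ and $r_k=(2k)^{-\ell}$ in \eqref{geom-sum}: the left-hand side becomes $\sum_{\bbt}\mfr^{\bbt}=\sum_{\bbt}(2\bN)^{-\ell\bbt}$, which is the series in \eqref{geom-sum1}, and its value $\prod_k(1-(2k)^{-\ell})^{-1}$ is finite precisely when $\sum_k(2k)^{-\ell}<\infty$, i.e. $\ell>1$.

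Part (3) is the simplest: since $\binom{\ba}{\bbt}=0$ unless $\bbt\leq\ba$, the sum over $\bbt$ is genuinely finite, and no convergence question arises. Coordinatewise factorization together with the ordinary binomial theorem $\sum_{m=0}^{n}\binom{n}{m}x^m=(1+x)^n$ gives $\sum_{\bbt}\binom{\ba}{\bbt}\mfr^{\bbt}=\prod_k(1+r_k)^{\alpha_k}=(1+\mfr)^{\ba}$, the product being finite because $\alpha_k=0$ for all but finitely many $k$. The step I expect to require the most care is the sum--product interchange underlying parts (1) and (2), namely the verification that summation over infinitely many coordinates may be exchanged with the infinite product; once the nonnegativity/Tonelli argument is recorded and the convergence of $\prod_k(1-r_k)^{-1}$ (equivalently $\sum_k r_k<\infty$) is checked, the remaining computations are just the three standard scalar identities applied factor by factor.
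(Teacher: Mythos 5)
Your proof is correct and follows essentially the same route as the paper's: factor every summand over the coordinate index $k$, reduce each identity to a scalar one (exponential series, negative binomial series, binomial theorem), and justify the sum--product interchange by nonnegativity together with the fact that only finitely many factors differ from $1$ for $\ba\in\cJ$. The paper phrases that last justification slightly differently (terms of the expanded product with infinitely many nonzero exponents vanish because $r_k\to 0$), but the argument is the same.
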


\begin{proof}
Note that
\begin{equation*}
\exp\left(\sum_{k\geq 1} r_k\right)=\prod_{k\geq 1} \sum_{n\geq 1}
\frac{r_k^n}{n!},\ \ \
\prod_{k\geq 1} \frac{1}{1-r_k}=\prod_{k\geq 1} \sum_{n\geq 1} r_k^n.
\end{equation*}
By assumption, $\lim_{k\to \infty} r_k=0$, and therefore
$$
\prod_{k\geq 1} r_k^{n_k}=0
$$
unless only finitely many of $n_k$ are not equal to zero. Then both \eqref{exp-sum} and \eqref{geom-sum} with $\ba=\bs{(0)}$ follow.
For general $\ba$,  \eqref{geom-sum} follows from
$$
\sum_{k\geq 0} \binom{n+k}{k}x^k=\frac{1}{(1-x)^{n+1}},\ |x|<1,
$$
which, in turn, follows by differentiating $n$ times the equality $\sum_k x^k=(1-x)^{-1}$. Recall that
$$
\sum_{k}r_k<\infty, \ 0<r_k<1 \ \
\Rightarrow\ \ 0<\prod_k \frac{1}{1-r_k} < \infty.
$$
Equality \eqref{binom-sum} follows from
the usual binomial formula.
\end{proof}

\begin{corollary}
(a) For every collection $f_{\ba},\ \ba\in \cJ$ of elements from $X$ there exists a
weight sequence $r_{\ba},\ \ba \in \cJ$ such that $\sum_{\ba\in{\mathcal{J}}} \Vert f_{\ba
}\Vert_{X}^{2}r_{\ba}^{2}<\infty$.

(b)  If $q_k>1$ and $\sum_{k\geq 1} 1/q_k < \infty$, then the space
$L^+_{2,\mfq}(X)$ is nuclear.

(c) The space $\cS^{\rho}(X)$ is nuclear for every  $\rho\in [0,1]$.
\end{corollary}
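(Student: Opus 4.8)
The plan is to handle the three parts separately: (a) is an elementary countability argument, while (b) and (c) are both instances of the Hilbert--Schmidt criterion for nuclearity, fed by the summation formulas of Proposition \ref{prop:conv-ps}.

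For part (a) the only fact needed is that $\cJ$ is countable. First I would fix an enumeration $\ba^{(1)},\ba^{(2)},\dots$ of $\cJ$ and set
$$
r_{\ba^{(n)}}=2^{-n/2}\big(1+\|f_{\ba^{(n)}}\|_X\big)^{-1},
$$
which is a positive number for every $n$. Then $\|f_{\ba^{(n)}}\|_X^2\,r_{\ba^{(n)}}^2\le 2^{-n}$, so $\sum_{\ba\in\cJ}\|f_\ba\|_X^2 r_\ba^2\le\sum_{n\ge1}2^{-n}<\infty$. This says exactly that every formal series $\sum_\ba f_\ba\xi_\ba$ lies in $\cR L_2(\bF;X)$ for a suitable weight, confirming that the union defining a generalized random element is nonempty for each collection of coefficients; there is nothing further to do.

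For parts (b) and (c) I would invoke the standard criterion that a projective limit $\bigcap_p H_p$ of Hilbert spaces (with increasing norms) is nuclear provided that for every index there is a higher index for which the canonical linking map is Hilbert--Schmidt; indeed, composing two such maps yields a nuclear map, so the space is nuclear. Replacing the intersection over all real parameters by a countable cofinal sequence of norms, it therefore suffices, given $p$, to exhibit $p'>p$ such that the embedding of the higher space into the lower one is Hilbert--Schmidt. Reading off the orthonormal basis $\{\xi_\ba/r_\ba\}$ adapted to each weight, the squared Hilbert--Schmidt norm of the embedding equals a multi-indexed series in the ratio of the two weight sequences, and the whole matter reduces to the convergence of that series.

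For (b), with weights $r_\ba=\mfq^{p\ba}$ the ratio between levels $p'$ and $p$ yields $\sum_{\ba\in\cJ}\mfq^{-2(p'-p)\ba}$; choosing $p'-p\ge 1/2$ and using $q_k>1$ gives $q_k^{-2(p'-p)}\le q_k^{-1}<1$, so the hypothesis $\sum_k 1/q_k<\infty$ combined with the geometric summation \eqref{geom-sum} (taken at $\ba=\bs{(0)}$) gives $\sum_\ba\mfq^{-2(p'-p)\ba}=\prod_k\big(1-q_k^{-2(p'-p)}\big)^{-1}<\infty$. For (c) the Kondratiev weights \eqref{eq:S-weight} share the same factor $(\ba!)^{\rho/2}$ at both levels of the projective limit $\cS^\rho(X)=\bigcap_\ell(\cS)_{\rho,\ell}(X)$, so it cancels in the ratio and the relevant series is the pure power sum $\sum_\ba(2\bN)^{-2(\ell'-\ell)\ba}$, which converges by \eqref{geom-sum1} as soon as $2(\ell'-\ell)>1$; since $\rho$ has dropped out, the estimate is uniform in $\rho\in[0,1]$. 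The main obstacle, and the only place genuine work is required, is precisely the convergence of these weighted multi-indexed series, which is exactly what Proposition \ref{prop:conv-ps} supplies; the one bookkeeping point to keep in mind is that the $X$-factor enters each linking map as the identity, so the Hilbert--Schmidt count is carried out on the chaos coefficients (equivalently, relative to an orthonormal basis of $X$), with nuclearity understood in the standard countably-Hilbert sense.
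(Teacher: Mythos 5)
Your proof is correct and follows essentially the same route as the paper: parts (b) and (c) are obtained by exhibiting Hilbert--Schmidt linking maps between consecutive levels of the projective limits and bounding their Hilbert--Schmidt norms by the multi-indexed series \eqref{geom-sum} and \eqref{geom-sum1}, which is exactly the paper's argument (with $p'=p+1$ and $\ell'=\ell+1$ in place of your gaps $\geq 1/2$). The only, immaterial, deviation is in part (a), where you enumerate $\cJ$ and take $r_{\ba^{(n)}}=2^{-n/2}(1+\|f_{\ba^{(n)}}\|_X)^{-1}$ while the paper takes $r_{\ba}=(2\bN)^{-\ba}/(1+\|f_{\ba}\|_X)$ and invokes \eqref{geom-sum1}; both choices work, and your closing caveat about the $X$-factor (the linking maps are Hilbert--Schmidt on the chaos coefficients, i.e.\ literally Hilbert--Schmidt only for finite-dimensional $X$) is a genuine subtlety that the paper's own proof passes over silently.
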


\begin{proof} (a) In view of \eqref{geom-sum1}, one can take, for example,
$$
r_{\ba}=\frac{(2\bN)^{-\ba}}{1+\|f_{\ba}\|_X}.
$$

(b) By \eqref{geom-sum}, the embedding
$L^{p+1}_{2,\mfq}(X)\subset L^p_{2,\mfq}(X)$ is Hilbert-Schmidt for every $p\in \bR$.

(c) Note that $\sum_{k\geq 1} (2k)^{-2} < \infty$. Therefore,
by \eqref{geom-sum},
the embedding $(\cS)_{\rho,\ell+1}(X)\subset (\cS)_{\rho,\ell}(X)$ is
Hilbert-Schmidt for every $\ell\in \bR$.
\end{proof}

To summarize, an element $f$ of ${\mathcal{R}}L_{2}({\mathbb{F}};X)$
 can be identified with a formal series
$\sum_{\alpha\in{\mathcal{J}}}f_{\alpha}\xi_{\alpha},$
where $f_{\alpha}\in X$ and
$\sum_{\alpha\in{\mathcal{J}}} \Vert f_{\alpha
}\Vert_{X}^{2}r_{\alpha}^{2}<\infty$. Conversely,
every formal series $\sum_{\ba\in \cJ} f_{\ba} \xi_{\ba}$,
$f_{\ba}\in X$, is a generalized $X$-valued random element.
Using \eqref{Hep-a}, we get an alternative representation of
generalized $X$-valued random elements:
\begin{equation}
\label{eq:gen-Hep}
f=\sum_{\ba\in \cJ} \bar{f}_{\ba} \, \Hep_{\ba},
\end{equation}
with $\bar{f}_{\ba}\in X$.

The following definition extends the three operators of the
Malliavin calculus to generalized random elements.
\begin{definition}
\label{def:main}
Let $u=\sum_{\ba\in \cJ} u_{\alpha}\, \xi_{\ba}$ be a generalized
$\cU$-valued random element,
$v=\sum_{\ba\in \cJ} v_{\alpha}\, \xi_{\ba}$,
 a generalized $X$-valued random element, and
 $f= \sum_{\ba\in \cJ} f_{\ba}\, \xi_{\ba}$,
 a generalized $X\otimes \cU$-valued random element.

(1) The {\tt Malliavin derivative} of $v$ with respect to $u$ is the
generalized $X\otimes \cU$-valued random element
\begin{equation}
\label{MD-gen}
\bD_u(v)=\sum_{\ba\in \cJ}
\left( \sum_{\bbt\in \cJ}\sqrt{\binom{\ba+\bbt}{\bbt}}\
v_{\ba+\bbt}\otimes u_{\bbt}  \right) \xi_{\ba}
\end{equation}
provided the inner sum is well-defined.

(2) The {\tt Skorokhod integral} of $f$ with respect to $u$ is a
generalized $X$-valued random element
\begin{equation}
\label{DO-gen}
\bdel_u(f)=\sum_{\ba\in \cJ}
\left(
\sum_{\bbt\in \cJ}
\sqrt{\binom{\ba}{\bbt}} \ (f_{\bbt},u_{\ba-\bbt})_{\cU}
\right) \xi_{\ba}.
\end{equation}

(3) The {\tt Ornstein-Uhlenbeck} operator with respect to $u$,
when applied to $v$, is a generalized $X$-valued random element
\begin{equation}
\label{OU-gen}
\cL_u(v)=\sum_{\ba\in \cJ}
\left(
\sum_{\bbt,\bg\in \cJ}
\sqrt{\binom{\ba}{\bbt}\binom{\bbt+\bg}{\bbt}} \ v_{\bbt+\bg}
(u_{\bg}, u_{\ba-\bbt})_{\cU}
\right) \xi_{\ba},
\end{equation}
provided the inner sum is well-defined.
\end{definition}
The definitions imply that both $\bD$ and $\bdel$ are {\em bi-linear}
operators:
$$
\cA_{au+bv}(w)=a\cA_u(w)+b\cA_v(w),\
\cA_u(av+bw)=a\cA_u(v)+b\cA_u(w),\ a,b\in \bR,
$$
for all suitable $u,v,w$; $\cA$ is either $\bD$ or $\bdel$.
The operation  $\cL_u(v)$ is linear in $v$ for fixed $u$, but is not
linear in $u$.
The equality
$$
\bD_{\xi_{\bbt}}(\xi_{\ba})=\sqrt{\binom{\ba}{\bbt}}\xi_{\ba-\bbt}
 $$
 shows that, in general $\bD_u(v)\not=\bD_v(u)$.
 The definition of the Skorokhod integral
$\bdel_u(f)$ has a built-in non-symmetry between  the integrator $u$ and
the integrand $f$: they have to belong to different spaces. This is necessary
to keep the definition consistent with \eqref{DO-wn}. Similar non-symmetry
holds for the Ornstein-Uhlenbeck operator $\cL_u(v)$.
Still, we will see later that $\bdel_u(f)=\bdel_f(u)$
if both $f$ and $u$ are real-valued. If $\bD_u(v)$ is defined, then
$\cL_u(v)=\bdel_u(\bD_u(v))$, but $\cL_u(v)$ can exist even when
$\bD_u(v)$ is not defined.

Next, note that
 $\bdel_u(f)$ is a well-defined generalized random element for
 all $u$ and $f$, while definitions of $\bD_u(v)$ and $\cL_u(f)$ require additional
 assumptions. Indeed,
 $\binom{\ba}{\bbt}=0$ unless $\bbt\leq \ba$, and therefore
 the inner sum on the right-hand side of \eqref{DO-gen} always contains
 finitely many non-zero terms. By the same reason, the inner sums on the
 right-hand sides of \eqref{MD-gen}  and \eqref{OU-gen} usually contain
 infinitely many non-zero terms and the convergence must be verified.
 This observation is an extension of
  Remark \ref{rem:main}, and we illustrate it on a concrete example.
 The example also shows that $\cL_u(v)$ can be defined even when $\bD_u(v) $
  is not.
 \begin{example}{\rm
 Consider $u=v=\dot{W}$.
 Then $\bD_u(v)$ is not defined. Indeed,
 \begin{equation}
 \label{ex-coef}
 u_{\ba}=v_{\ba}=
 \begin{cases}
 \mfu_k,&\  {\rm if } \ \ba=\bep(k),\ k\geq 1,\\
 0,& \ {\rm otherwise}.
 \end{cases}
 \end{equation}
 Thus, $\big(\bD_u(v)\big)_{\ba}=0$ if $|\ba|>0$, and
 $$
 \big(\bD_u(v)\big)_{\bs{(0)}}=\sum_{k\geq 1} \mfu_k\otimes \mfu_k,
 $$
 which is not a convergent series.

 On the other hand, interpreting $v$ as an $\bR\otimes \cU$-valued
 generalized random element, we find
 $$
 \big(\bdel_u(v) \big)_{\ba}=
 \begin{cases}
 \sqrt{2}, \ba=2\bep(k),\ k\geq 1,\\
 0,,
 \end{cases}
 $$
 or, keeping in mind that $\sqrt{2}\,\xi_{2\bep(k)}=\Hep_2(\xi_k)$,
 $$
 \bdel_{\dot{W}}(\dotW)=\sum_{k\geq 1} \Hep_2(\xi_k).
 $$
 Note that $\sum_{k\geq 1} \Hep_2(\xi_k)\in (\cS)_{-1,\ell}(\bR)$ for every
 $\ell<-1/2$.

 We conclude the example with an observation that,  although $\bD_{\dotW}(\dotW)$ is not defined, $\cL_{\dotW}(\dot{W})$ is.
 If fact, \eqref{OU-gen} implies that
 $$
 \cL_{\dotW}(\dotW)=\dotW,
 $$
 which is consistent with \eqref{OU-basis} and \eqref{ex-coef}. }
 \end{example}
 If either $u$ or $v$ is a finite linear combination of $\xi_{\ba}$, then
  $\bD_u(v)$ is defined. The following proposition gives two more  sufficient
 conditions for $\bD_u(v)$ to be defined.
 \begin{proposition}
 \label{prop:MD-gc}

 (1) Assume that there exist weights $r_{\ba},\ \ba \in \cJ$ such that
 \begin{equation}
 \label{MDg-v}
 \sum_{\ba\in \cJ} 2^{|\ba|}r_{\ba}^{-2} \|v_{\ba}\|_{X}^2< \infty
 \ {\rm and }
 \ \sum_{\ba\in \cJ}  r_{\ba}^{2} \|u_{\ba}\|_{\cU}^2< \infty.
 \end{equation}
If
 \begin{equation}
 \label{gr-c1}
 \sup_{\bbt\in \cJ}\frac{r_{\ba+\bbt}}{r_{\bbt}}:=b_{\ba}< \infty
 \end{equation}
  for every $\ba\in \cJ$, then $\bD_u(v)$ is well-defined and
  \begin{equation}
  \label{MDa1}
  \|\big(\bD_u(v)\big)_{\ba}\|_{X\otimes \cU}^2 \leq 2^{|\ba|}\,b_{\ba}^2
  \sum_{\bbt\in \cJ} 2^{|\bbt|}r_{\bbt}^{-2} \|v_{\bbt}\|_{X}^2
 \ \sum_{\bbt\in \cJ}  r_{\bbt}^{2} \|u_{\bbt}\|_{\cU}^2\,.
 \end{equation}

 (2) Assume that there exist weights $r_{\ba},\ \ba \in \cJ$ such that
 \begin{equation}
 \label{MDg-u}
\sum_{\ba\in \cJ} r_{\ba}^2 \|v_{\ba}\|_{X}^2< \infty \ {\rm and }
 \ \sum_{\ba\in \cJ} 2^{|\ba|} r_{\ba}^{-2} \|u_{\ba}\|_{\cU}^2< \infty.
 \end{equation}
 If
 \begin{equation}
 \label{gr-c2}
 \sup_{\bbt\in \cJ}\frac{r_{\bbt}}{r_{\ba+\bbt}}:=c_{\ba}< \infty
 \end{equation}
 for every $\ba\in \cJ$, then $\bD_u(v)$ is well-defined and
 \begin{equation}
 \label{MDa2}
 \|\big(\bD_u(v)\big)_{\ba}\|_{X\otimes \cU}^2\leq 2^{|\ba|}\,c_{\ba}^2\,
 \sum_{\bbt\in \cJ} r_{\bbt}^2 \|v_{\bbt}\|_{X}^2
  \sum_{\bbt\in \cJ} 2^{|\bbt|} r_{\bbt}^{-2} \|u_{\bbt}\|_{\cU}^2\,.
 \end{equation}
 \end{proposition}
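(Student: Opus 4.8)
The plan is to prove both parts by estimating, for each fixed $\ba\in\cJ$, the $X\otimes\cU$-norm of the coefficient $\big(\bD_u(v)\big)_{\ba}=\sum_{\bbt\in\cJ}\sqrt{\binom{\ba+\bbt}{\bbt}}\,v_{\ba+\bbt}\otimes u_{\bbt}$ appearing in \eqref{MD-gen}, and showing it is finite; since $X\otimes\cU$ is complete, absolute convergence of this series is all that ``well-defined'' requires. By the triangle inequality and the identity $\|a\otimes b\|_{X\otimes\cU}=\|a\|_X\|b\|_\cU$ for simple tensors, it suffices to bound $\sum_{\bbt}\sqrt{\binom{\ba+\bbt}{\bbt}}\,\|v_{\ba+\bbt}\|_X\|u_{\bbt}\|_\cU$.

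The first key ingredient is the elementary combinatorial bound $\binom{\ba+\bbt}{\bbt}\le 2^{|\ba|+|\bbt|}$, which follows by writing the multi-index binomial coefficient as the product $\prod_k\binom{\alpha_k+\beta_k}{\beta_k}$ and using $\binom{m}{j}\le 2^{m}$ coordinatewise. Hence $\sqrt{\binom{\ba+\bbt}{\bbt}}\le 2^{|\ba|/2}2^{|\bbt|/2}$, and pulling out the factor $2^{|\ba|/2}$ reduces matters to controlling $\sum_{\bbt}2^{|\bbt|/2}\|v_{\ba+\bbt}\|_X\|u_{\bbt}\|_\cU$.

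The second ingredient is a weighted Cauchy--Schwarz split tuned to the hypotheses. For part (1) I would write each summand as $\big(2^{|\bbt|/2}r_{\ba+\bbt}^{-1}\|v_{\ba+\bbt}\|_X\big)\big(r_{\ba+\bbt}\|u_{\bbt}\|_\cU\big)$ and use the ratio condition \eqref{gr-c1}, $r_{\ba+\bbt}\le b_{\ba}r_{\bbt}$, on the second factor to replace $r_{\ba+\bbt}$ by $b_{\ba}r_{\bbt}$; applying Cauchy--Schwarz in $\bbt$ then produces the product of $\big(\sum_{\bbt}2^{|\bbt|}r_{\ba+\bbt}^{-2}\|v_{\ba+\bbt}\|_X^2\big)^{1/2}$ and $\big(\sum_{\bbt}r_{\bbt}^2\|u_{\bbt}\|_\cU^2\big)^{1/2}$, with the constant $b_\ba$. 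Re-indexing $\bg=\ba+\bbt$ and crudely bounding $2^{|\bbt|}=2^{|\bg|-|\ba|}\le 2^{|\bg|}$ while dropping the constraint $\bg\ge\ba$ identifies the first factor with the hypothesized sum $\sum_{\bg}2^{|\bg|}r_{\bg}^{-2}\|v_{\bg}\|_X^2$; combining this with the $2^{|\ba|/2}$ pulled out earlier and squaring gives exactly \eqref{MDa1}. Both sums on the right are finite by \eqref{MDg-v}, so the series converges and $\bD_u(v)$ is well-defined. Part (2) is the mirror image: the same combinatorial bound reduces to the same quantity, but now I would attach the twisted weight $r_{\ba+\bbt}^{-1}$, together with the factor $2^{|\bbt|/2}$, to the $u$-term and use \eqref{gr-c2}, $r_{\bbt}\le c_{\ba}r_{\ba+\bbt}$, to convert it to $c_\ba r_\bbt^{-1}$, so that after Cauchy--Schwarz the two sums match the hypotheses \eqref{MDg-u} and yield \eqref{MDa2}.

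The only genuine bookkeeping obstacle is placing the weights so that exactly the two hypothesized sums emerge: the conditions \eqref{gr-c1} and \eqref{gr-c2} are engineered precisely to let one transfer the ``diagonal'' weight $r_{\ba+\bbt}$ to the weight $r_{\bbt}$ in the appropriate direction, and the choice of which of $u$ or $v$ carries the $2^{|\bbt|}$ factor is what distinguishes the two cases. Everything else is the triangle inequality, Cauchy--Schwarz, and the estimate $\binom{\ba+\bbt}{\bbt}\le 2^{|\ba|+|\bbt|}$.
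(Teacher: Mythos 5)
Your proposal is correct and follows essentially the same route as the paper: the bound $\binom{\ba+\bbt}{\bbt}\leq 2^{|\ba|+|\bbt|}$ obtained coordinatewise from $\binom{n}{k}\leq 2^{n}$, followed by the triangle inequality and a weighted Cauchy--Schwarz split, with \eqref{gr-c1} (resp.\ \eqref{gr-c2}) used to transfer the weight $r_{\ba+\bbt}$ to $r_{\bbt}$ in the appropriate direction. The paper compresses the Cauchy--Schwarz step into a single sentence; your write-up merely makes explicit the placement of the weights and the re-indexing $\bg=\ba+\bbt$, both of which are carried out correctly.
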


 \begin{proof}
 Using
 $$
 \sum_{k\geq 0} \binom{n}{k}=2^n
 $$
 we conclude that $\binom{n}{k}\leq 2^n$ for all $k\geq 0$ and therefore
 \begin{equation}
 \label{multi-binom}
 \binom{\ba}{\bbt} =\prod_k \binom{\alpha_k}{\beta_k}\leq  2^{|\ba|}
 \end{equation}
 for all $\bbt\in \cJ$.
 Therefore,
 \begin{equation}
 \begin{split}
 \big\|\big(\bD_u(v)\big)_{\ba}\big\|_{X\otimes \cU} &= \left\| \sum_{\bbt\in \cJ}\sqrt{\binom{\ba+\bbt}{\bbt}}\
v_{\ba+\bbt}\otimes u_{\bbt}  \right\|_{X\otimes \cU}\\
&\leq \sum_{\bbt\in \cJ} 2^{|\ba+\bbt|/2}\,\|v_{\ba+\bbt}\|_X\ \|u_{\bbt}\|_{\cU}.
\end{split}
\end{equation}
 and the result follows by the Cauchy-Schwartz inequality.
 \end{proof}

\begin{remark} {\rm (a) If $r_{\ba}=\mfq^{\ba}$ for some sequence
$\mfq$, then both \eqref{gr-c1} and \eqref{gr-c2} hold. (b) More information
about the structure of $u$ and/or $v$ can lead to weaker sufficient conditions.
 For example, if
 $(u_{\ba},u_{\bbt})_{\cU}=0$ for $\ba\not=\bbt$, and
 $\|u_{\ba}\|_{\cU}\leq 1$, then
 $\big\|\big(\bD_u(v)\big)_{\ba}\big\|_{X\otimes \cU}^2<\infty$ if
 and only if
 $$
 \sum_{\bbt\in \cJ}\binom{\ba+\bbt}{\bbt}\|v_{\ba+\bbt}\|_X^2<\infty,
 $$
 which is a generalization of \eqref{MD-cond}. Similarly, if $(u_{\ba},u_{\bbt})_{\cU}=0$ for $\ba\not=\bbt$,  then $\big(\cL_u(v)\big)_{\ba}$ exists for all $\ba\in \cJ$ and
 $$
 \big(\cL_u(v)\big)_{\ba}=\sum_{\beta\in \cJ}\binom{\ba}{\bbt}\, v_{\bbt}
 \, \|u_{\ba-\bbt}\|_{\cU}^2.
 $$}
\end{remark}

The reader is encouraged to verify that
\begin{enumerate}
\item If  $u=\dot{W}$, with $u_{\bep(k)}=\mfu_k$ and $u_{\ba}=0$ otherwise, then
 \eqref{MD-gen}, \eqref{DO-gen}, and \eqref{OU-gen} become, respectively,
 \eqref{MD-wn}, \eqref{DO-wn}, and \eqref{OU-wn}.
 \item The operators $\boldsymbol{\delta}_{\xi_{k}}$ and $\mathbf{D}_{\xi_{k}}$ are the \emph{creation} and \emph{annihilation} operators from quantum physics \cite{GJ}:
\begin{equation}
{\mathbf{D}}_{\xi_k}(\xi_{\boldsymbol{\alpha}})=\sqrt{\alpha_{k}}       %
\,\xi_{\boldsymbol{\alpha}-\boldsymbol{\varepsilon}\left(  k\right)  },\ \ {\boldsymbol{\delta}}_{\xi_{k}}(\xi_{\boldsymbol{\alpha}       %
})=\,\sqrt{\alpha_{k}+1}\,\xi_{\boldsymbol{\alpha}+\boldsymbol{\varepsilon
}\left(  k\right).  }
\label{eq: uxi}
\end{equation}
More  generally,
\begin{equation}
\label{MC-basis}
\bD_{\xi_{\bbt}}(\xi_{\ba})=\sqrt{\binom{\ba}{\bbt}}\,\xi_{\ba-\bbt},\
\bdel_{\xi_{\bbt}}(\xi_{\ba})=\sqrt{\binom{\ba+\bbt}{\bbt}}\, \xi_{\ba+\bbt},\ \cL_{\xi_{\bbt}}(\xi_{\ba})={\binom{\ba}{\bbt}}\,\xi_{\ba}.
\end{equation}
\item If
\begin{equation}
\label{dual-squit-cond}
v\in L_2(\bF;X), \ f\in L_2(\bF;X\otimes \cU),\
\bD_u(v)\in L_2(\bF;X\otimes \cU),\
\bdel_u(f)\in L_2(\bF;X),
\end{equation}
then a simple rearrangement of terms shows that the
      following analogue of \eqref{eq:SkI-ip} holds:
\begin{equation}
\label{dual-squit}
\bE\big(\bD_u(v),f\big)_{X\otimes \cU}=\bE \big(v,\bdel_u(f)\big)_X.
      \end{equation}
       For example,
$$
\bD_u(\xi_{\bs{\gamma}})=
\sum_{\ba\in \cJ}
\sqrt{\binom{\bg}{\ba}}\
u_{\bs{\gamma}-\ba}\, \xi_{\ba},
$$
and, if we assume that $u$ and $f$ are such that $\bdel_u(f)\in L_2(\bF;X)$,
then
$$
\bE \big(\xi_{\bs{\gamma}}\bdel_u(f)\big)=
\sum_{\ba\in \cJ}
\sqrt{\binom{\bg}{\ba}}\
(u_{\bs{\gamma}-\ba},f_{\ba})_{\cU}=
\bE\big(f,\bD_u(\xi_{\bs{\gamma}}\big)_{\cU}.
$$
\item With the notation $\Hep_{\ba}=\sqrt{\ba!}\,\xi_{\ba}$,
\[
{\mathbf{D}}_{\xi_{k}}(\Hep_{\boldsymbol{\alpha}})=\alpha_{k}       %
\,\Hep_{\boldsymbol{\alpha}-\boldsymbol{\varepsilon}\left(  k\right)},\ \ {\boldsymbol{\delta}}_{\xi_{k}}(\Hep_{\boldsymbol{\alpha}})
=\Hep_{\boldsymbol{\alpha}+\boldsymbol{\varepsilon}\left(  k\right)  },
\]
and
\begin{equation}
\label{wp1}
\bdel_{\Hep_{\ba}}(\Hep_{\bbt})=\Hep_{\ba+\bbt}.
\end{equation}
\end{enumerate}

To conclude the section, we use \eqref{wp1} to establish a connection between the
Skorokhod integral $\bdel$ and the {\em Wick product.}

\begin{definition}
\label{def:wp}
Let $f$ be a generalized $X$-valued random element and $\eta$,
a generalized real-valued random element. The
{\tt Wick product} $f\diamond \eta$ is a generalized $X$-valued random
element defined by
\begin{equation}
\label{wp-def}
f\diamond \eta = \sum_{\ba\in \cJ} \left(
\sum_{\bbt\in \cJ} \sqrt{\binom{\ba}{\bbt}}
f_{\ba-\bbt}\,\eta_{\bbt}\right)\ \xi_{\ba}.
\end{equation}
\end{definition}
The definition implies that $f\diamond \eta=\eta\diamond f$,
\begin{equation}
\label{wp-bas}
\xi_{\ba}\diamond \xi_{\bbt}=\sqrt{\binom{\ba+\bbt}{\ba}}\,\xi_{\ba+\bbt},\ \ \Hep_{\ba}\diamond \Hep_{\bbt}=\Hep_{\ba+\bbt}.
\end{equation}
In other words,
 \eqref{wp-def}  extends by linearity relation \eqref{wp1} to generalized random elements.
Comparing  \eqref{wp-def} and \eqref{DO-gen}, we get the
connection between the Wick product and the Skorokhod integral.
\begin{theorem}
\label{th:wp-do}
If $f$ is a generalized $X$-valued random element and $\eta$,
a generalized real-valued random element, then $\bdel_{\eta}(f)=
f\diamond \eta$.
In particular, if
$\eta$ and $\theta$ are generalized real-valued random elements, then
$$
\bdel_{\eta}(\theta)=\bdel_{\theta}(\eta)=\eta\diamond \theta.
$$
\end{theorem}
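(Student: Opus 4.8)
The plan is to reduce everything to a coefficientwise identity and then match the two formal series term by term. Since $\eta$ is a generalized \emph{real}-valued random element, the Skorokhod integral $\bdel_{\eta}(f)$ is the one of Definition~\ref{def:main}(2) with integrator space $\cU=\bR$. Under the canonical identification $X\otimes\bR\cong X$, the inner product $(\cdot,\cdot)_{\cU}$ appearing in \eqref{DO-gen} reduces to multiplication of a vector in $X$ by a scalar, so that
$$
\big(\bdel_{\eta}(f)\big)_{\ba}
=\sum_{\bbt\in\cJ}\sqrt{\binom{\ba}{\bbt}}\;\eta_{\ba-\bbt}\,f_{\bbt}
\qquad(\ba\in\cJ).
$$

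The next step is a change of summation variable. Because $\binom{\ba}{\bbt}=0$ unless $\bbt\le\ba$, each inner sum has only finitely many nonzero terms, so no convergence question arises and we may reindex freely. Putting $\bg=\ba-\bbt$ and invoking the symmetry $\binom{\ba}{\bbt}=\binom{\ba}{\ba-\bbt}$ of the multi-index binomial coefficient, the displayed coefficient becomes
$$
\sum_{\bg\in\cJ}\sqrt{\binom{\ba}{\bg}}\;f_{\ba-\bg}\,\eta_{\bg},
$$
which is precisely $\big(f\diamond\eta\big)_{\ba}$ by \eqref{wp-def}. As the coefficients agree for every $\ba\in\cJ$, the two formal series are identical, giving $\bdel_{\eta}(f)=f\diamond\eta$.

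For the final assertion, when both $\eta$ and $\theta$ are real-valued I would apply the first part twice to obtain $\bdel_{\eta}(\theta)=\theta\diamond\eta$ and $\bdel_{\theta}(\eta)=\eta\diamond\theta$; the commutativity $f\diamond\eta=\eta\diamond f$ recorded just after Definition~\ref{def:wp} then yields $\bdel_{\eta}(\theta)=\bdel_{\theta}(\eta)=\eta\diamond\theta$.

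I do not expect any genuine obstacle here: both sides are well-defined generalized random elements for arbitrary coefficients (each inner sum being finite), so the entire content is the bookkeeping of the reduction $X\otimes\bR\cong X$ together with the binomial symmetry that aligns the two indexings. The only point requiring a moment's care is confirming that the substitution $\bg=\ba-\bbt$ maps the index set $\{\bbt:\bbt\le\ba\}$ bijectively onto itself, which is immediate.
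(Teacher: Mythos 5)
Your proof is correct and follows the same route as the paper, which simply observes that with $\cU=\bR$ the coefficient formula \eqref{DO-gen} for $\bdel_{\eta}(f)$ turns into \eqref{wp-def} after the reindexing $\bbt\mapsto\ba-\bbt$ and the symmetry $\binom{\ba}{\bbt}=\binom{\ba}{\ba-\bbt}$; you have merely written out the bookkeeping that the paper leaves implicit.
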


The original definition of Wick product \cite{Wick-orig} is not related to the Skorokhod integral, and is it remarkable that the two coincide in some situations. The important feature of \eqref{wp-def} is the presence of point-wise
multiplication, which does not admit a straightforward extension to
general spaces.

 A natural definition of the multiple Wiener-It\^{o}
integral in the one-dimensional setting, that is, with respect to a single
standard Gaussian random variable $\xi$, is as follows.
With only scalar as possible integrands, set
$$
I_n(1) = \Hep_n(\xi).
$$
As expected,
$$
\bD_{\xi} (I_n(1))=n I_{n-1}(1),\ \bdel_{\xi}(I_n(1))=I_{n+1}(1).
$$
This  is consistent with the general definition as long as the Wick product is used throughout: $I_n(1) =\xi^{\diamond n}$.
An interested reader can easily extend this construction to finitely many
iid  standard Gaussian random variables.

Definition \ref{def:wp} and Theorem \ref{th:wp-do} raise the following
questions:
\begin{enumerate}
\item Is it possible to extend the operation $\diamond$ by replacing the
point-wise product on the right-hand side of \eqref{wp-def} with
 something else and still preserve the connection with the operator $\bdel$? Clearly, simply setting $f\diamond u=\bdel_u(f)$ is not
 acceptable, as we expect the $\diamond$ operation to be fully
 symmetric.
\item Under what conditions will the operator $v\mapsto u\diamond v$
 be  (a Hilbert space) adjoint or (a topological space) dual of $\bD_u$?
 \item What is the most general construction of the multiple  Wiener-It\^{o}
 integral?
 \end{enumerate}
 We will not address these questions in this paper and leave them for future
  investigation (see references
 \cite{PotAn1, PotAn2} for some particular cases).


\section{Elements of Malliavin Calculus on special spaces}
\label{sec:ex}

The  objectives of this section are
\begin{itemize}
\item  to establish results of the type
$$
\|\cA_u(v)\|_a\leq C\big(\|u\|_b\big) \,\|v\|_c,
$$
where $\|\cdot\|_i,\ i=a,b,c$ are norms in the  suitable sequence
or Kondratiev spaces, the function $C$ is  independent of  $v$, and
$\cA$ is one of the operators $\bD$, $\bdel$, $\cL$.
\item to look closer at  $\bD$ and  $\bdel$
as adjoints of each other when \eqref{dual-squit-cond} does not hold.
\end{itemize}
To simplify the notations, we will write
$L^p_{2,\mfq}(X)$ for $L^p_{2,\mfq}(\bF;X)$.

We start with the ``path of the least resistance'' approach and see what  one
can obtain with a straightforward application of the Cauchy-Schwartz inequality.
The first collection of results is for the sequence spaces.
\begin{theorem}
\label{th:main-ps}
Let $\mfq=\{q_k,\ k\geq 1\}$ be a sequence such that $q_k>1$ for all $k$ and
$\sum_{k\geq 1} 1/q_k^2 < \infty$. Denote by $\sqrt{2}\mfq$ the
sequence $\{\sqrt{2}\,q_k,\ k\geq 1\}$.

(a) If $u\in L^{-1}_{2,\mfq}(\cU)$ and $v\in L_{2,\sqrt{2}\mfq}(X)$, then
$\bD_u(v)\in L_{2}(\bF;X\otimes \cU)$ and
$$
\big(\bE\|\bD_u(v)\|_{X\otimes \cU}^2\big)^{1/2} \leq \left(\prod_{k\geq 1} \frac{q_k^2}{q_k^2-1}\right)^{1/2}\,
\|u\|_{L^{-1}_{2,\mfq}(\cU)}\, \|v\|_{L_{2,\sqrt{2}\mfq}(X)}.
$$
(b) If $u\in L^{-1}_{2,\mfq}(\cU)$, $f\in L^{-1}_{2,\mfq}(X\otimes \cU)$,
and $\sum_{k\geq 1} 2^k/q_k^2<\infty$, then $\bdel_u(f)\in L^{-1}_{2,\sqrt{2}\mfq}(X)$ and
$$
\|\bdel_u(f)\|_{L^{-1}_{2,\sqrt{2}\mfq}(X)}\leq \left(\sum_{k\geq 1}\frac{2^k}{q_k^2} \right)^{1/2}
\|u\|_{ L^{-1}_{2,\mfq}(\cU)}\, \|f\|_{L^{-1}_{2,\mfq}(X\otimes \cU)}.
$$
In particular, if $u\in L^-_{2,\mfq}(\cU)$ and $f\in L^-_{2,\mfq}(X\otimes U)$, then $\bdel_u(f)\in L^-_{2,\mfq}(X)$.

(c) If $u\in L^{-1}_{2,\mfq}(\cU)$, $v\in L_{2,\sqrt{2}\mfq}(X)$, and $\sum_{k\geq 1} 2^k/q_k^2<\infty$, then $\cL_u(v)\in L^{-1}_{2,\sqrt{2}\mfq}(X)$ and
$$
\|\cL_u(v)\|_{L^{-1}_{2,\sqrt{2}\mfq}(X)}\leq \left(\prod_{k\geq 1} \frac{q_k^2}{q_k^2-1}\right)^{1/2}\,
\left(\sum_{k\geq 1} \frac{2^k}{q_k^2} \right)^{1/2}
\|u\|^2_{L^{-1}_{2,\mfq}(\cU)}\, \|v\|_{L_{2,\sqrt{2}\mfq}(X)}.
$$
\end{theorem}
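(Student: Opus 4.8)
The plan is to prove all three bounds by a single mechanism. First I would use the Cameron--Martin theorem (Theorem \ref{th:CM}) together with the definition of the weighted norms to rewrite each left-hand side as a sum over $\cJ$ of the squared $X$- or $X\otimes\cU$-norms of the chaos coefficients of $\bD_u(v)$, $\bdel_u(f)$, $\cL_u(v)$, which are read off from \eqref{MD-gen}, \eqref{DO-gen}, \eqref{OU-gen}. Each coefficient is then estimated by one application of Cauchy--Schwarz over the inner multi-index, with the weights split so that the ``combinatorial'' factor collapses to a closed form via Proposition \ref{prop:conv-ps} and the complementary factor reassembles into the advertised norms of $u,v,f$.

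I would do part (b) first, since it is the cleanest and it explains the rescaled target space. Starting from $\big(\bdel_u(f)\big)_{\ba}=\sum_{\bbt\le\ba}\sqrt{\binom{\ba}{\bbt}}\,(f_{\bbt},u_{\ba-\bbt})_{\cU}$, the triangle inequality and Cauchy--Schwarz in $\cU$ give $\|(\bdel_u(f))_{\ba}\|_X\le\sum_{\bbt\le\ba}\sqrt{\binom{\ba}{\bbt}}\,\|f_{\bbt}\|_{X\otimes\cU}\|u_{\ba-\bbt}\|_{\cU}$. I then apply Cauchy--Schwarz over $\bbt$ with the symmetric weight split $\mfq^{2\bbt}$ on the $f$-slot and $\mfq^{2(\ba-\bbt)}$ on the $u$-slot. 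The combinatorial factor is the \emph{finite} sum $\sum_{\bbt\le\ba}\binom{\ba}{\bbt}\mfq^{2\ba}=\mfq^{2\ba}2^{|\ba|}$ (the $\mfr\equiv 1$ case of \eqref{binom-sum}), and the role of the target weight $(\sqrt2\mfq)^{-2\ba}=2^{-|\ba|}\mfq^{-2\ba}$ is precisely to cancel it. The surviving factor $\sum_{\bbt\le\ba}\mfq^{-2\bbt}\|f_{\bbt}\|^2\,\mfq^{-2(\ba-\bbt)}\|u_{\ba-\bbt}\|^2$, summed over $\ba$ and reindexed by $\bg=\ba-\bbt$, factors as $\|f\|^2_{L^{-1}_{2,\mfq}}\|u\|^2_{L^{-1}_{2,\mfq}}$; the summability hypothesis on $\mfq$ enters only if one instead controls $\binom{\ba}{\bbt}$ by the cruder bound $2^{|\ba|}$ used in part (a). The ``in particular'' claim for the injective limit $L^-_{2,\mfq}$ then follows because, since $\sum_k q_k^{-2}<\infty$ forces $q_k\to\infty$, the bounded loss in passing from $\sqrt2\mfq$ back to $\mfq$ can be absorbed into one lower index $p$.

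For part (a) the coefficient $\big(\bD_u(v)\big)_{\ba}=\sum_{\bbt\in\cJ}\sqrt{\binom{\ba+\bbt}{\bbt}}\,v_{\ba+\bbt}\otimes u_{\bbt}$ is an \emph{infinite} sum, so here convergence must be proved rather than merely rearranged. I would bound $\binom{\ba+\bbt}{\bbt}\le 2^{|\ba+\bbt|}$ via \eqref{multi-binom} (exactly as in Proposition \ref{prop:MD-gc}) and apply Cauchy--Schwarz distributing the weights so that $v$ is measured in $L_{2,\sqrt2\mfq}$ and $u$ in $L^{-1}_{2,\mfq}$, obtaining $\|(\bD_u(v))_{\ba}\|_{X\otimes\cU}\le\mfq^{-\ba}\,\|v\|_{L_{2,\sqrt2\mfq}(X)}\|u\|_{L^{-1}_{2,\mfq}(\cU)}$. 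Summing the squares uses $\sum_{\ba\in\cJ}\mfq^{-2\ba}=\prod_{k\ge1}\frac{q_k^2}{q_k^2-1}$, the product form of the geometric series (the $\ba=\bs{(0)}$ instance behind \eqref{geom-sum}), finite exactly because $\sum_k q_k^{-2}<\infty$. This reproduces the stated constant and, being uniform, places $\bD_u(v)$ in the unweighted space $L_2(\bF;X\otimes\cU)$.

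Part (c) I would get by composition, writing $\cL_u(v)=\bdel_u\big(\bD_u(v)\big)$ rather than expanding \eqref{OU-gen} directly. Part (a) gives $\bD_u(v)\in L_2(\bF;X\otimes\cU)$; since $q_k>1$ the weights $\mfq^{-\ba}$ are $\le 1$, so $L_2\subset L^{-1}_{2,\mfq}$ and $\|\bD_u(v)\|_{L^{-1}_{2,\mfq}}\le\|\bD_u(v)\|_{L_2}$. Feeding this into part (b) multiplies the two constants and contributes the extra factor $\|u\|_{L^{-1}_{2,\mfq}(\cU)}$, so that the result is quadratic in $u$ and linear in $v$ in the target space $L^{-1}_{2,\sqrt2\mfq}(X)$, matching the claim. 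The hard part throughout is the infinite inner sums in (a) and (c): one must check that the bounding series actually converges before the Cauchy--Schwarz split is rearranged, and verify that the combinatorial factor produced by the chosen weights cancels the target weight \emph{exactly} rather than up to a divergent product---this balance is what forces the $\sqrt2$-rescaling of $\mfq$ and the hypothesis $\sum_k q_k^{-2}<\infty$.
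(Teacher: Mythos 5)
Your proposal is correct, and for parts (a) and (c) it is essentially the paper's own argument: (a) is the coefficient-wise Cauchy--Schwarz estimate $\|(\bD_u(v))_{\ba}\|_{X\otimes\cU}\le \mfq^{-\ba}\|u\|_{L^{-1}_{2,\mfq}(\cU)}\|v\|_{L_{2,\sqrt2\mfq}(X)}$ obtained from $\binom{\ba+\bbt}{\bbt}\le 2^{|\ba+\bbt|}$ (the paper invokes its Proposition \ref{prop:MD-gc} with $r_{\ba}=b_{\ba}=\mfq^{-\ba}$, which encapsulates the same computation), followed by summing the geometric series $\sum_{\ba}\mfq^{-2\ba}=\prod_k q_k^2/(q_k^2-1)$; and (c) is obtained in the paper exactly as you do it, by composing (a) with (b) through the embedding $L_2(\bF;X\otimes\cU)\subset L^{-1}_{2,\mfq}(X\otimes\cU)$. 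Where you genuinely diverge is part (b). The paper first applies the crude bound $\binom{\ba}{\bbt}\le 2^{|\ba|}$ of \eqref{multi-binom} and then does a weighted Cauchy--Schwarz, which leaves a residual factor that it controls by the extra hypothesis $\sum_k 2^k/q_k^2<\infty$; you instead keep the binomial coefficients inside the Cauchy--Schwarz and use the exact finite identity $\sum_{\bbt\le\ba}\binom{\ba}{\bbt}=2^{|\ba|}$, so that the target weight $(\sqrt2\mfq)^{-2\ba}=2^{-|\ba|}\mfq^{-2\ba}$ cancels it precisely and the remaining convolution sum factors as $\|f\|^2_{L^{-1}_{2,\mfq}}\|u\|^2_{L^{-1}_{2,\mfq}}$. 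This buys you a cleaner and strictly stronger statement: constant $1$ and no need for $\sum_k 2^k/q_k^2<\infty$. Note that your bound does not formally imply the paper's displayed inequality when $\big(\sum_k 2^k/q_k^2\big)^{1/2}<1$, but in that regime the paper's inequality is in fact unattainable (take $u_{\ba}$ and $f_{\ba}$ supported at $\ba=\bs{(0)}$ with unit norms to see that any such constant must be at least $1$), so your version is the one to keep. Your handling of the ``in particular'' claim and of the absolute convergence of the infinite inner sums in (a) and (c) before rearrangement is also sound.
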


\begin{proof}
(a) By \eqref{MDa1} with
$r_{\ba}=b_{\ba}=\mfq^{-\ba}$,
$$
\|\big(\bD_u(v)\big)_{\ba}\|_{X\otimes \cU}^2
\leq \mfq^{-2\ba}\,\|u\|_{L^{-1}_{2,\mfq}(\cU)}^2\, \|v\|_{L_{2,\sqrt{2}\mfq}(X)}^2
$$
The result then follows from \eqref{geom-sum}.

(b) By   \eqref{DO-gen},  \eqref{multi-binom},
 and the Cauchy-Schwartz inequality,
$$
\|\big(\bdel_u(f)\big)_{\ba}\|_{X}^2 \leq
2^{|\ba|}\mfq^{2\ba}\sum_{\bbt}\mfq^{-2\bbt}\|f_{\bbt}\|_{X\otimes \cU}^2
\sum_{\bbt\leq \ba}\mfq^{-2(\ba-\bbt)}\|u_{\ba-\bbt}\|_{\cU}^2,
$$
and the result follows.

(c) This follows by combining the results of (a) and (b).
\end{proof}
Analysis of the proof shows that alternative results are possible by avoiding inequality \eqref{multi-binom}; see Theorem \ref{th:main-ps1} below.
The next collection of results, this time for the Kondratiev spaces,  is again in the spirit of the ``path of the least resistance.''
\begin{theorem}
\label{th:main-ks}

(a) If $u\in (\cS)_{-1,-\ell}(\cU)$ and $v\in (\cS)_{1,\ell}(X)$ for some
 $\ell\in \bR$, then $\bD_u(v)\in (\cS)_{1,\ell-p}(X\otimes \cU)$ for
 all $p>1/2$, and
 $$
 \|\bD_u(v)\|_{(\cS)_{1,\ell-p}(X\otimes \cU)}^{1/2}\leq
 \left(\prod_{k\geq 1} \frac{1}{1-(2k)^{-2p}}\right)^{1/2}
 \|u\|_{ (\cS)_{-1,-\ell}(\cU)}\, \|v\|_{ (\cS)_{1,\ell}(X)}.
 $$
(b)  If $u\in (\cS)_{-1,\ell}(\cU)$ and $f\in (\cS)_{-1,\ell}(X\otimes \cU)$ for some
 $\ell \in \bR$, then $\bdel_u(f)\in (\cS)_{-1,\ell-p}(X)$ for every $p>1/2$, and
 $$
 \|\bdel_u(f)\|_{(\cS)_{-1,\ell-p}(X)}\leq
 \left(\sum_{\ba\in \cJ} (2\mathbb{N})^{-2p\ba}\right)^{1/2}
 \|u\|_{ (\cS)_{-1,\ell}(\cU)}\, \|f\|_{ (\cS)_{-1,\ell}(X\otimes \cU)}.
 $$
 In particular, if $u\in \cS_{-1}(\cU)$ and $f\in \cS_{-1}(X\otimes \cU)$, then
 $\bdel_u(f)\in \cS_{-1}(X)$.

(c) If $u\in (\cS)_{-1,-\ell}(\cU)$ and $v\in (\cS)_{1,\ell+p}(X)$ for some
 $\ell\in \bR$ and $p>1/2$, then $\cL_u(f)\in (\cS)_{-1,\ell-p}(X)$
 \begin{equation*}
 \begin{split}
 \|\cL_u(v)\|_{(\cS)_{-1,\ell-p}(X)}&\leq
 \left(\prod_{k\geq 1} \frac{1}{1-(2k)^{-2p}}\right)^{1/2}
 \left(\sum_{\ba\in \cJ} (2\mathbb{N})^{-2p\ba}\right)^{1/2}\\
 &
 \|u\|^2_{ (\cS)_{-1,-\ell}(\cU)}\, \|v\|_{ (\cS)_{1,\ell}(X\otimes \cU)}.
 \end{split}
 \end{equation*}
\end{theorem}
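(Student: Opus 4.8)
The plan is to handle all three parts by the ``path of least resistance'' recipe of Theorem~\ref{th:main-ps}: bound each coefficient $\big(\cA_u(v)\big)_{\ba}$ by a single Cauchy--Schwarz inequality, sum the weighted norms, and recognize the leftover factor as one of the convergent series of Proposition~\ref{prop:conv-ps}. The only novelty relative to the sequence spaces is that the Kondratiev weights carry the factorial factors $(\ba!)^{\pm 1/2}$, so the binomial coefficients must be split to match them.

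For part (a) I would begin from \eqref{MD-gen}, which gives
$$
\big\|\big(\bD_u(v)\big)_{\ba}\big\|_{X\otimes\cU}\leq \sum_{\bbt\in\cJ}\sqrt{\binom{\ba+\bbt}{\bbt}}\,\|v_{\ba+\bbt}\|_X\,\|u_{\bbt}\|_{\cU},
$$
and split $\binom{\ba+\bbt}{\bbt}=(\ba+\bbt)!/(\ba!\,\bbt!)$ so that $\big((\ba+\bbt)!\big)^{1/2}(2\bN)^{\ell(\ba+\bbt)}$ pairs with $v_{\ba+\bbt}$ (the $(\cS)_{1,\ell}$-weight) and $(\bbt!)^{-1/2}(2\bN)^{-\ell\bbt}$ pairs with $u_{\bbt}$ (the $(\cS)_{-1,-\ell}$-weight). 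Cauchy--Schwarz in $\bbt$ and the substitution $\bg=\ba+\bbt$ then yield
$$
\big\|\big(\bD_u(v)\big)_{\ba}\big\|_{X\otimes\cU}^2\leq \frac{(2\bN)^{-2\ell\ba}}{\ba!}\Big(\sum_{\bg\geq\ba}\bg!\,(2\bN)^{2\ell\bg}\|v_{\bg}\|_X^2\Big)\,\|u\|_{(\cS)_{-1,-\ell}(\cU)}^2 .
$$
Multiplying by the target weight $(\ba!)(2\bN)^{2(\ell-p)\ba}$ cancels the factorial and the $(2\bN)^{2\ell\ba}$, leaving $(2\bN)^{-2p\ba}$; summing over $\ba$ and interchanging the two nonnegative sums (Tonelli) produces the factor $\sum_{\ba\leq\bg}(2\bN)^{-2p\ba}\leq \sum_{\ba\in\cJ}(2\bN)^{-2p\ba}=\prod_{k\geq1}\big(1-(2k)^{-2p}\big)^{-1}$, finite for $p>1/2$ by \eqref{geom-sum}--\eqref{geom-sum1}. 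This is exactly the asserted constant.

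Part (b) rests on the observation that $\bdel_u(f)$ is a convolution. Splitting $\binom{\ba}{\bbt}=\ba!/\big(\bbt!(\ba-\bbt)!\big)$ and writing $\bg=\ba-\bbt$, I would introduce the rescaled scalar sequences
$$
G_{\bbt}=(2\bN)^{(\ell-p)\bbt}(\bbt!)^{-1/2}\|f_{\bbt}\|_{X\otimes\cU},\qquad H_{\bg}=(2\bN)^{(\ell-p)\bg}(\bg!)^{-1/2}\|u_{\bg}\|_{\cU},
$$
whose multiplicativity in $(2\bN)^{(\ell-p)\ba}=(2\bN)^{(\ell-p)\bbt}(2\bN)^{(\ell-p)\bg}$ gives $(\ba!)^{-1/2}(2\bN)^{(\ell-p)\ba}\big\|\big(\bdel_u(f)\big)_{\ba}\big\|_X\leq (G*H)_{\ba}$, hence $\|\bdel_u(f)\|_{(\cS)_{-1,\ell-p}(X)}^2\leq \|G*H\|_{\ell_2(\cJ)}^2$. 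Young's inequality $\|G*H\|_{\ell_2}\leq\|G\|_{\ell_1}\|H\|_{\ell_2}$ (itself just the triangle inequality for the $\ell_2$-norm of a sum of shifts of $H$) then finishes the estimate: $\|H\|_{\ell_2}\leq\|u\|_{(\cS)_{-1,\ell}(\cU)}$ because $(2\bN)^{-2p\bg}\leq1$, while Cauchy--Schwarz gives $\|G\|_{\ell_1}\leq\big(\sum_{\ba\in\cJ}(2\bN)^{-2p\ba}\big)^{1/2}\|f\|_{(\cS)_{-1,\ell}(X\otimes\cU)}$, which is the stated constant.

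Part (c) I would derive by composition, $\cL_u(v)=\bdel_u\big(\bD_u(v)\big)$: applying (a) at level $\ell+p$ places $\bD_u(v)$ in $(\cS)_{1,\ell}(X\otimes\cU)$, the embedding $(\cS)_{1,\ell}(Y)\subset(\cS)_{-1,\ell}(Y)$ (valid since $\ba!\geq1$ forces the $\rho=1$ norm to dominate the $\rho=-1$ norm) feeds this into (b), and multiplying the two constants gives the quadratic-in-$u$ bound. I expect the bookkeeping of indices here to be the main obstacle: (b) requires $u$ and $\bD_u(v)$ on a common level, whereas $u$ is only assumed in $(\cS)_{-1,-\ell}$, so one must insert scale embeddings and decide how the single exponent $p$ is shared between the two applications. (The printed form of the theorem---the exponent $1/2$ on the left of (a) and the space $(\cS)_{1,\ell}(X\otimes\cU)$ for $v$ in (c)---appears to carry typographical slips; the intended objects are $\|\bD_u(v)\|_{(\cS)_{1,\ell-p}(X\otimes\cU)}$ and $\|v\|_{(\cS)_{1,\ell+p}(X)}$.) Apart from this, the only analytic points are the interchange of summation in (a) and the convolution inequality in (b), both justified by the nonnegativity of every term.
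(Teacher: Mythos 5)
Your proposal is correct and follows essentially the same route as the paper: parts (a) and (b) are the paper's own weight-splitting Cauchy--Schwarz computations (your Young's-inequality packaging of (b) as $\|G*H\|_{\ell_2}\le\|G\|_{\ell_1}\|H\|_{\ell_2}$ is just a reorganization of the same estimate and yields the identical constant $\big(\sum_{\ba\in\cJ}(2\bN)^{-2p\ba}\big)^{1/2}$), and part (c) is obtained, exactly as in the paper, by composing (a) and (b) through the embedding $(\cS)_{1,\ell}(Y)\subset(\cS)_{-1,\ell}(Y)$. The level-matching difficulty you flag in (c) --- that (b) needs $u$ and $\bD_u(v)$ on a common Kondratiev level while $u$ is only assumed in $(\cS)_{-1,-\ell}$ --- is real, but the paper's one-sentence proof of (c) leaves it equally unaddressed (it silently uses an inclusion of the form $(\cS)_{-1,-\ell}\subset(\cS)_{-1,\ell'}$ at the relevant level), so on this point, as on the typographical slips you note, your attempt matches the paper.
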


\begin{proof}
To simplify the notations, we write  $r_{\ba}=(2\mathbb{N})^{\ell \ba}$.

(a) By \eqref{MD-gen},
$$
\big(\bD_u(v)\big)_{\ba}=\sum_{\bbt}
\left(\frac{r^2_{\ba+\bbt}(\ba+\bbt)!}{r^2_{\ba}\, r^2_{\bbt}
\ba!\bbt!}\right)^{1/2}v_{\ba+\bbt}\otimes u_{\bbt}.
$$
To get the result, use triangle inequality, followed by the Cauchy-Schwartz inequality and \eqref{exp-sum}.

(b) By   \eqref{DO-gen}
 and the Cauchy-Schwartz inequality,
$$
\|\big(\bdel_u(f)\big)_{\ba}\|_{X}^2 \leq
r^{-2}_{\ba}\ba!\sum_{\bbt}\frac{r_{\bbt}^2}{\bbt!}
\|f_{\bbt}\|_{X\otimes \cU}^2
\sum_{\bbt\leq \ba}\frac{r_{\ba-\bbt}^2}{(\ba-\bbt)!}\|u_{\ba-\bbt}\|_{\cU}^2,
$$
and the result follows.

(c) This follows by combining the results of (a)  and (b), because
$$
(\cS)_{1,\ell}(X)\subset (\cS)_{-1,\ell}(X).
$$
\end{proof}

Let us now discuss the duality relation between $\bdel_u$ and $\cD_u$.
Recall that \eqref{dual-squit} is just a consequence of the definitions, once the
terms in the corresponding sums are rearranged, {\em as long as the sums converge}. Condition \eqref{dual-squit-cond} is one way to ensure the convergence, but is not the only possibility: one can also use duality relations
between various weighted chaos spaces.

In particular,  duality relation \eqref{dual-ks} and  Theorem \ref{th:main-ks}
lead to the following version of \eqref{dual-squit}: if, for
some $\ell\in \bR$ and $p>1/2$, we have  $u\in (\cS)_{-1,-\ell-p}(\cU)$,
$v\in (\cS)_{1,\ell+p}(X)$, and $f\in (\cS)_{-1,\ell}(X\otimes \cU)$,  then
\begin{equation}
\label{dualOP-KS}
\langle \bdel_u(f),v\rangle_{1,\ell+p}=\langle f,\bD_u(v)\rangle_{1,\ell}.
\end{equation}
To derive a similar result in the sequence spaces,
we need a different version of Theorem \ref{th:main-ps}.
\begin{theorem}
\label{th:main-ps1}
Let $\mfp,\ \mfq,$ and $\mfr$ be sequences of positive numbers such that
\begin{equation}
\label{ps-rel}
\frac{1}{p_k^2}+\frac{1}{q_k^2}=\frac{1}{r_k^2},\ k\geq 1.
\end{equation}
(a) If $u\in L_{2,\mfp}(\cU)$ and $f\in L_{2,\mfq}(X\otimes \cU)$, then
$\bdel_u(f)\in L_{2,\mfr}(X)$ and
$$
\|\bdel_u(f)\|_{L_{2,\mfr}(X)}\leq \|u\|_{L_{2,\mfp}(\cU)}\,
\|f\|_{L_{2,\mfq}(X\otimes \cU)}.
$$
(b) In addition to \eqref{ps-rel} assume that
\begin{equation}
\label{ps-rel1}
\sum_{k\geq 1} \frac{r_k^2}{p_k^2}<\infty.
\end{equation}
Define
$$
\bar{C}=\left(\prod_{k\geq 1} \frac{p_k^2}{p_k^2-r_k^2}\right)^{1/2}.
$$
If $u\in L_{2,\mfp}(\cU)$ and $v\in L^{-1}_{2,\mfr}(X)$, then
 $\bD_u(v)\in L^{-1}_{2,\mfq}(X\otimes \cU)$ and
 $$
 \|\bD_u(v)\|_{L^{-1}_{2,\mfq}(X\otimes \cU)} \leq
 \bar{C}\,\|u\|_{ L_{2,\mfp}(\cU)}\, \|v\|_{ L^{-1}_{2,\mfr}(X)}.
 $$
\end{theorem}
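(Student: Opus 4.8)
The plan is to prove both bounds at the level of the individual chaos coefficients and then sum, in each case applying the Cauchy--Schwarz inequality with a split that is engineered so that the hypothesis \eqref{ps-rel} collapses the resulting deterministic weight sum. For part (a) I start from \eqref{DO-gen}, which gives the \emph{finite} sum $\big(\bdel_u(f)\big)_{\ba}=\sum_{\bbt\leq \ba}\binom{\ba}{\bbt}^{1/2}(f_{\bbt},u_{\ba-\bbt})_{\cU}$. Using $\|(f_{\bbt},u_{\ba-\bbt})_{\cU}\|_X\leq \|f_{\bbt}\|_{X\otimes\cU}\|u_{\ba-\bbt}\|_{\cU}$ and multiplying by the target weight $\mfr^{\ba}$, I write each summand as a product $A_{\bbt}B_{\bbt}$ with the ``clean'' factor $B_{\bbt}=\mfq^{\bbt}\|f_{\bbt}\|_{X\otimes\cU}\,\mfp^{\ba-\bbt}\|u_{\ba-\bbt}\|_{\cU}$ and $A_{\bbt}=\binom{\ba}{\bbt}^{1/2}\mfr^{\ba}\mfq^{-\bbt}\mfp^{-(\ba-\bbt)}$, so that Cauchy--Schwarz over $\bbt$ isolates $\sum_{\bbt\leq\ba}A_{\bbt}^2=\mfr^{2\ba}\sum_{\bbt\leq\ba}\binom{\ba}{\bbt}\mfq^{-2\bbt}\mfp^{-2(\ba-\bbt)}$.

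The crucial point is that, by the coordinatewise binomial identity \eqref{binom-sum}, this equals $\mfr^{2\ba}\prod_k(q_k^{-2}+p_k^{-2})^{\alpha_k}$, which by \eqref{ps-rel} is exactly $\mfr^{2\ba}\prod_k r_k^{-2\alpha_k}=\mfr^{2\ba}\mfr^{-2\ba}=1$. Hence $\mfr^{2\ba}\|(\bdel_u(f))_{\ba}\|_X^2\leq \sum_{\bbt\leq\ba}\mfq^{2\bbt}\|f_{\bbt}\|_{X\otimes\cU}^2\,\mfp^{2(\ba-\bbt)}\|u_{\ba-\bbt}\|_{\cU}^2$; summing over $\ba$ and re-indexing $\bg=\ba-\bbt$ decouples the double sum into the product $\|f\|_{L_{2,\mfq}(X\otimes\cU)}^2\,\|u\|_{L_{2,\mfp}(\cU)}^2$, which is the claim.

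For part (b) I run the same scheme on \eqref{MD-gen}, where $(\bD_u(v))_{\ba}=\sum_{\bbt}\binom{\ba+\bbt}{\bbt}^{1/2}v_{\ba+\bbt}\otimes u_{\bbt}$ is now an \emph{infinite} sum. Multiplying by the target weight $\mfq^{-\ba}$ and using the clean factor $\mfr^{-(\ba+\bbt)}\|v_{\ba+\bbt}\|_X\,\mfp^{\bbt}\|u_{\bbt}\|_{\cU}$, the deterministic factor becomes $\sum_{\bbt}A_{\bbt}^2=(\mfr/\mfq)^{2\ba}\sum_{\bbt}\binom{\ba+\bbt}{\bbt}(\mfr^2/\mfp^2)^{\bbt}$. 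Here the relevant tool is the geometric-type identity \eqref{geom-sum} rather than \eqref{binom-sum}: since \eqref{ps-rel} forces $r_k<p_k$ we have $r_k^2/p_k^2<1$, and the extra assumption \eqref{ps-rel1} guarantees $\sum_k r_k^2/p_k^2<\infty$, so \eqref{geom-sum} applies and yields $\bar{C}^2\,(\mfr/\mfq)^{2\ba}\prod_k(1-r_k^2/p_k^2)^{-\alpha_k}$. A one-line computation from \eqref{ps-rel} shows $\frac{r_k^2}{q_k^2}\cdot\frac{p_k^2}{p_k^2-r_k^2}=1$ for every $k$, so this factor collapses to the constant $\bar{C}^2$ independent of $\ba$. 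Summing over $\ba$, re-indexing $\bs{\mu}=\ba+\bbt$, and discarding the restriction $\bs{\mu}\geq\bbt$ gives $\|\bD_u(v)\|_{L^{-1}_{2,\mfq}(X\otimes\cU)}^2\leq \bar{C}^2\,\|u\|_{L_{2,\mfp}(\cU)}^2\,\|v\|_{L^{-1}_{2,\mfr}(X)}^2$.

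The only genuine obstacle is choosing the Cauchy--Schwarz split so that the resulting deterministic weight sum is summable in closed form; once the powers of $\mfp$, $\mfq$, $\mfr$ are apportioned as above, the hypothesis \eqref{ps-rel} is precisely what makes the inner binomial sum collapse to $1$ in part (a) and the factor collapse to $\bar{C}^2$ in part (b), while \eqref{ps-rel1} is exactly the convergence condition required by \eqref{geom-sum}. Everything else is routine bookkeeping with the index shifts $\bg=\ba-\bbt$ and $\bs{\mu}=\ba+\bbt$.
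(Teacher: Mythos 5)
Your proof is correct and follows essentially the same route as the paper's: coefficientwise Cauchy--Schwarz over $\bbt$ with the weights apportioned so that \eqref{ps-rel} collapses the binomial sum (via \eqref{binom-sum}) to $1$ in part (a) and the geometric sum (via \eqref{geom-sum}, using \eqref{ps-rel1}) to $\bar{C}^2$ in part (b), followed by the index shifts that decouple the double sum into the product of norms. The only difference is cosmetic bookkeeping: the paper routes the same split through an auxiliary sequence $\mfc=\{p_k^2/q_k^2\}$ (resp.\ $\{r_k^2/p_k^2\}$) and the identities \eqref{aux-d1}, \eqref{aux-d1a}, whereas you distribute the powers of $\mfp,\mfq,\mfr$ directly.
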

\begin{proof}
(a) By \eqref{DO-gen},
\begin{equation*}
\begin{split}
\|\bdel_u(f)\|_{L_{2,\mfr}(X)}^2&=\sum_{\bg\in\cJ}
\left\|\sum_{\ba+\bbt=\bg}\sqrt{\binom{\bg}{\ba}}
(f_{\ba},u_{\bbt})_{\cU}\right\|^2_X \mfr^{2\bg} \\
&\leq
\sum_{\bg\in\cJ}\left\|\sum_{\ba+\bbt=\bg}
\sqrt{\binom{\bg}{\ba}}|(f_{\ba},u_{\bbt})_{\cU}|\;
\mfr^{\ba}\mfr^{\bbt}\right\|^2_X.
\end{split}
\end{equation*}
Define the sequence $\mfc=\{c_k,\ k\geq 1\}$ by $c_k=p^2_k/q^2_k$,
so that
\begin{equation}
\label{aux-d1}
(1+\mfc^{-1})^{\ba}\mfr^{2\ba}=\mfq^{2\ba},\
(1+\mfc)^{\ba}\mfr^{2\ba}=\mfp^{2\ba}.
\end{equation}
Then
$$
\|\bdel_u(f)\|_{L_{2,\mfr}(X)}^2\leq \sum_{\bg\in\cJ}\left(\sum_{\ba+\bbt=\bg}
\sqrt{\binom{\bg}{\ba}}\mfc^{\ba/2}\mfc^{-\ba/2}
\|f_{\ba}\|_{\cU\otimes X}\|u_{\bbt}\|_{\cU}\;
\mfr^{\ba}\mfr^{\bbt}\right)^2.
$$
By the Cauchy-Schwartz inequality and \eqref{binom-sum},
\begin{equation*}
\begin{split}
\|\bdel_u(f)\|_{L_{2,\mfr}(X)}^2\leq &
\sum_{\bg\in\cJ}\left(\left(\sum_{\ba\in \cJ}{\binom{\bg}{\ba}}\mfc^{\ba}\right)
\left(\sum_{\ba+\bbt=\bg}\mfc^{-\ba}
\|f_{\ba}\|^2_{\cU\otimes X}\|u_{\bbt}\|^2_{\cU}
\;\mfr^{2\ba}\mfr^{2\bbt}\right)\right)\\
= &
\sum_{\bg\in\cJ}\left((1+\mfc)^{\bg}
\left(\sum_{\ba+\bbt=\bg}\mfc^{-\ba}
\|f_{\ba}\|^2_{\cU\otimes X}\|u_{\bbt}\|^2_{\cU}\;
\mfr^{2\ba}\mfr^{2\bbt}\right)\right) \\
= &
\sum_{\bg\in\cJ}\sum_{\ba+\bbt=\bg}(1+\mfc^{-1})^{\ba}
(1+\mfc)^{\bbt}
\|f_{\ba}\|^2_{\cU\otimes X}\|u_{\bbt}\|^2_{\cU}\;
\mfr^{2\ba}\mfr^{2\bbt}\\
= & \left(\sum_{\ba\in\cJ}
\|f_{\ba}\|^2_{\cU\otimes X}\;(1+\mfc^{-1})^{\ba}\mfr^{2\ba}\right)
\left(\sum_{\bbt\in\cJ}\|u_{\bbt}\|^2_{\cU}\;(1+\mfc)^{\bbt}\mfr^{2\bbt}
\right)\\
=& \left(\sum_{\ba\in\cJ}\|f_{\ba}\|^2_{\cU\otimes X}\;\mfp^{2\ba}\right)
\left(\sum_{\bbt\in\cJ}\|u_{\bbt}\|^2_{\cU}\;\mfq^{2\bbt}\right)=
\|f\|^2_{L_{2,\mfp}(X\otimes \cU)}\;\|u\|^2_{L_{2,\mfp}(\cU)}.
\end{split}
\end{equation*}
(b) By \eqref{MD-gen},
\begin{equation*}
\begin{split}
 \|\bD_u(v)\|_{L^{-1}_{2,\mfq}(X\otimes \cU)}^2&=
 \sum_{\ba\in \cJ} \left\|\sum_{\bbt\in \cJ}
 \sqrt{\binom{\ba+\bbt}{\bbt} } \
 v_{\ba+\bbt}\otimes u_{\bbt}\right\|_{X\otimes \cU}^2
 \mfq^{-2\ba}\\
 &\leq \sum_{\ba\in \cJ} \left(\sum_{\bbt\in \cJ}
 \sqrt{\binom{\ba+\bbt}{\bbt} } \
 \|v_{\ba+\bbt}\|_{X}\, \| u_{\bbt}\|_{\cU} \right)^2
 \mfq^{-2\ba}
\end{split}
\end{equation*}
 Define the sequence $\mfc=\{c_k,\ k\geq 1\}$ by $c_k=r^2_k/p^2_k<1$,
so that
\begin{equation}
\label{aux-d1a}
(\mfc^{-1}-1)^{\ba}\mfq^{2\ba}=\mfp^{2\ba},\
(1-\mfc)^{\ba}\mfq^{2\ba}=\mfr^{2\ba}.
\end{equation}
 Then
 $$
 \|\bD_u(v)\|_{L^{-1}_{2,\mfq}(X\otimes \cU)}^2\leq
 \sum_{\ba\in \cJ} \left(\sum_{\bbt\in \cJ}
 \sqrt{\binom{\ba+\bbt}{\bbt} } \
 \mfc^{\bbt/2}\mfq^{-(\ba+\bbt)}\mfc^{-\bbt/2}\|v_{\ba+\bbt}\|_{X}\, \mfq^{\bbt} \|u_{\bbt}\|_{\cU} \right)^2.
 $$
 By the Cauchy-Schwartz inequality and \eqref{geom-sum},
 \begin{equation*}
\begin{split}
\|\bD_u(v)\|_{L^{-1}_{2,\mfq}(X\otimes \cU)}^2&\leq
\sum_{\ba\in \cJ}\left(\sum_{\bbt\in \cJ} \binom{\ba+\bbt}{\bbt} \mfc^{\bbt}
\right) \left(\sum_{\bbt\in \cJ}
\mfq^{-2(\ba+\bbt)}\mfc^{-\bbt}\|v_{\ba+\bbt}\|_{X}^2\, \mfq^{2\bbt} \|u_{\bbt}\|_{\cU}^2
\right)\\
&=
\bar{C}^2\sum_{\ba\in \cJ}\left(\Big( (1-\mfc)^{-\ba}\Big) \sum_{\bbt\in \cJ}
\|v_{\ba+\bbt}\|_X^2\mfc^{-\bbt}\, \mfq^{-2(\ba+\bbt)}\,
\mfq^{2\bbt}\, \|u_{\bbt}\|_{\cU}^2\right)\\
=
\bar{C}^2\sum_{\bbt\in \cJ}\Bigg(\|u_{\bbt}\|_{\cU}^2 &
 \,\mfc^{-\bbt} (1-\mfc)^{\bbt}\,\mfq^{2\bbt}
\left(\sum_{\ba\in \cJ} \|v_{\ba+\bbt}\|_X^2
\mfq^{-2(\ba+\bbt)} (1-\mfc)^{-(\ba+\bbt)}\right)\Bigg)\\
&\leq
\bar{C}^2\left(\sum_{\bbt\in \cJ}\|u_{\bbt}\|_{\cU}^2
(\mfc^{-1}-1)^{\bbt}\mfq^{2\bbt}\right)
\left(\sum_{\ba\in \cJ}\|v_{\ba}\|_X^2
(1-\mfc)^{-\ba}\mfq^{-2\ba}\right)\\
&=
\bar{C}^2\|u\|^2_{ L_{2,\mfp}(\cU)}\, \|v\|^2_{ L^{-1}_{2,\mfr}(X)},
\end{split}
\end{equation*}
where the last equality follows from \eqref{aux-d1a}. Note also that
$$
\sum_{\ba\in \cJ} \|v_{\ba+\bbt}\|_X^2
\mfr^{-2(\ba+\bbt)}
\leq
\|v\|^2_{ L^{-1}_{2,\mfr}(X)}
$$
and the equality holds if and only if $\bbt=\bs{(0)}$.
\end{proof}

Together with duality relation \eqref{dual-q},  Theorem \ref{th:main-ps1}
leads to the following version of \eqref{dual-squit-cond}:  if $u\in L_{2,\mfp}(\cU)$,
 $f\in L_{2,\mfq}(X\otimes \cU)$,  and $v\in L^{-1}_{2,\mfr}(X)$, if the
sequences $\mfp, \mfq, \mfr$ are related by \eqref{ps-rel}, and
if  \eqref{ps-rel1} holds, then
\begin{equation}
\label{dualOP-PS}
\langle \bdel_u(f),v\rangle_{\mfr}=\langle f,\bD_u(v)\rangle_{\mfq}.
\end{equation}
Here is a general procedure to construct sequences $\mfp, \mfq, \mfr$
satisfying \eqref{ps-rel} and \eqref{ps-rel1}.
Start with an arbitrary sequence of positive numbers $\mfp$ and a sequence
$\mfc$ such that $0<c_k<1$ and $\sum_{k\geq 1} c_k<\infty$.
Then set $r_k^2=c_kp_k^2$ and $q_k^2=p_k^2/(c_k^{-1}-1)$.
If the space $\mathcal{U}$ is $n$-dimensional, then condition
\eqref{ps-rel1} is not necessary because
  sequences $\mfp,\mfq,\mfr$  are finite.

  \begin{theorem}
  \label{OU-PSp}
  Let $\mfp,\ \mfq,$ and $\mfr$ be sequences of positive numbers such that
\begin{align}
\label{ps-ou1}
&\left(\frac{1}{r_k^2}-\frac{1}{p_k^2}\right)\left(q_k^2-\frac{1}{p_k^2}\right)=1,\ k\geq 1;\\
\label{ps-ou2}
&p_k^2q_k^2>1,\ k\geq 1, \ {\rm and\ } \sum_{k\geq 1}\frac{1}{p_k^2q_k^2} < \infty.
\end{align}
 If $u\in L_{2,\mfp}(\cU)$ and $v\in L_{2,\mfq}(X)$, then
$\cL_u(v)\in L_{2,\mfr}(X)$ and
\begin{equation}
\label{OU-cnorm}
\|\cL_u(v)\|_{L_{2,\mfr}(X)}\leq
\left(\prod_{k\geq 1} \frac{p_k^2q_k^2}{p_k^2q_k^2-1}\right)^{1/2}
 \|u\|_{L_{2,\mfp}(\cU)}^2\,
\|v\|_{L_{2,\mfq}(X)}.
\end{equation}
  \end{theorem}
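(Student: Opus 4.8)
The plan is to exploit the factorization $\cL_u(v)=\bdel_u\big(\bD_u(v)\big)$, which holds once $\bD_u(v)$ is a genuine generalized element, and to compose the two estimates already supplied by Theorem~\ref{th:main-ps1}: part~(b) to control the derivative and part~(a) to control the subsequent Skorokhod integral. The entire argument hinges on choosing the correct intermediate weight sequence. I introduce $\mathbf{s}=(s_k)_{k\geq 1}$ defined by $s_k^2=q_k^2-1/p_k^2$; this is a sequence of strictly positive numbers precisely because $p_k^2q_k^2>1$ by \eqref{ps-ou2}, and it is the ``hinge'' through which the two halves of the composition are glued.

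First I would apply Theorem~\ref{th:main-ps1}(b). Since $v\in L_{2,\mfq}(X)=L^{-1}_{2,1/\mfq}(X)$ by \eqref{Lpq}, I invoke that part with integrator weight $\mfp$, with the ``$\mfr$''-role played by $1/\mfq$, and with output weight $1/\mathbf{s}$. The compatibility relation \eqref{ps-rel} then reads $1/p_k^2+s_k^2=q_k^2$, which is exactly the definition of $\mathbf{s}$, while the summability hypothesis \eqref{ps-rel1} becomes $\sum_k 1/(p_k^2q_k^2)<\infty$, i.e.\ precisely \eqref{ps-ou2}. Hence $\bD_u(v)\in L^{-1}_{2,1/\mathbf{s}}(X\otimes\cU)=L_{2,\mathbf{s}}(X\otimes\cU)$ with $\|\bD_u(v)\|_{L_{2,\mathbf{s}}(X\otimes\cU)}\leq \bar{C}\,\|u\|_{L_{2,\mfp}(\cU)}\,\|v\|_{L_{2,\mfq}(X)}$, where $\bar{C}=\big(\prod_k p_k^2/(p_k^2-1/q_k^2)\big)^{1/2}=\big(\prod_k p_k^2q_k^2/(p_k^2q_k^2-1)\big)^{1/2}$ is exactly the constant in \eqref{OU-cnorm}; note that condition \eqref{ps-ou2} is also what makes this infinite product converge to a finite value.

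Next I would feed $f=\bD_u(v)\in L_{2,\mathbf{s}}(X\otimes\cU)$ into Theorem~\ref{th:main-ps1}(a), with integrator weight $\mfp$, integrand weight $\mathbf{s}$, and output weight determined by \eqref{ps-rel}, namely $1/r_k^2=1/p_k^2+1/s_k^2=1/p_k^2+1/(q_k^2-1/p_k^2)$. The whole point of hypothesis \eqref{ps-ou1} is that this output weight coincides with the given $\mfr$: indeed, dividing \eqref{ps-ou1} by $q_k^2-1/p_k^2=s_k^2$ rearranges it to $1/r_k^2-1/p_k^2=1/s_k^2$. Thus $\bdel_u(f)\in L_{2,\mfr}(X)$ with $\|\bdel_u(f)\|_{L_{2,\mfr}(X)}\leq \|u\|_{L_{2,\mfp}(\cU)}\,\|\bD_u(v)\|_{L_{2,\mathbf{s}}(X\otimes\cU)}$, and chaining the two displayed bounds gives $\|\cL_u(v)\|_{L_{2,\mfr}(X)}\leq \bar{C}\,\|u\|_{L_{2,\mfp}(\cU)}^2\,\|v\|_{L_{2,\mfq}(X)}$, which is \eqref{OU-cnorm}. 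I do not expect a genuine analytic obstacle, since every hard estimate is already packaged inside Theorem~\ref{th:main-ps1}; the real work is the algebraic bookkeeping with the reciprocal weight sequences, and in particular recognizing that \eqref{ps-ou1} is exactly the compatibility condition forcing the composition $\bdel_u\circ\bD_u$ to return $L_{2,\mfq}(X)$ into $L_{2,\mfr}(X)$, while \eqref{ps-ou2} simultaneously guarantees the positivity of $\mathbf{s}$, the applicability of part~(b), and the finiteness of $\bar{C}$.
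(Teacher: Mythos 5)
Your proof is correct, but it takes a genuinely different route from the paper's. The paper proves Theorem \ref{OU-PSp} by a direct estimate on the coefficients $\big(\cL_u(v)\big)_{\ba}$ starting from the definition \eqref{OU-gen}: it introduces auxiliary sequences $\mathfrak{h}$ and $\mathfrak{w}$, applies the Cauchy--Schwartz inequality twice (once in the inner sum over $\bg$, once in the outer sum over $\bbt$) together with \eqref{geom-sum} and \eqref{binom-sum}, and only at the end solves the resulting system \eqref{ou-eq1-3} to discover the compatibility conditions \eqref{ps-ou1}--\eqref{ps-ou2}. You instead treat Theorem \ref{th:main-ps1} as a black box and compose its two halves through the factorization $\cL_u(v)=\bdel_u\big(\bD_u(v)\big)$, with the intermediate weight $s_k^2=q_k^2-1/p_k^2$ chosen so that part (b) lands $\bD_u(v)$ in $L_{2,\mathbf{s}}(X\otimes\cU)$ and part (a) then carries $\bdel_u$ of it into $L_{2,\mfr}(X)$. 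Your bookkeeping with the reciprocal weights, the identification of $\bar{C}$ with the constant in \eqref{OU-cnorm}, and the reading of \eqref{ps-ou1} as exactly the gluing condition are all accurate; the factorization itself is legitimate here because your first step establishes that $\bD_u(v)$ is well defined, and the paper explicitly records that $\cL_u(v)=\bdel_u(\bD_u(v))$ whenever $\bD_u(v)$ exists. Your route is shorter and makes the provenance of the constant and of hypotheses \eqref{ps-ou1}--\eqref{ps-ou2} completely transparent; the paper's self-contained computation has the (here unneeded) advantage of working straight from \eqref{OU-gen}, which remains meaningful even when $\bD_u(v)$ fails to exist, and its free parameters $\mathfrak{h},\mathfrak{w}$ in principle leave room for other trade-offs among $\mfp,\mfq,\mfr$.
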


  \begin{proof}
  It follows from \eqref{OU-gen} that
  $$
  \|\big(\cL_u(v)\big)_{\ba}\|_X^2 \leq
  \left(
  \sum_{\bbt,\bg} \sqrt{\binom{\bbt+\bg}{\bg}\, \binom{\ba}{\bbt}}
  \|v_{\bbt+\bg}\|_X\ \|u_{\ba-\bbt}\|_{\cU}\ \|u_{\bg}\|_{\cU} \right)^{2}.
  $$
  Let $\mathfrak{h}=\{h_k,\ k\geq 1\}$ be a sequence of positive numbers
  such that  that
  $$
 h_k<1,\ k\geq 1,\  \sum_k h_k<\infty.
 $$
 Define
 $$
 C_h=\left(\prod_{k}\frac{1}{1-h_k}\right)^{1/2}.
 $$
 Then
 \begin{equation*}
 \begin{split}
 \sum_{\bg} \sqrt{\binom{\bbt+\bg}{\bg}}
  \|v_{\bbt+\bg}\|_X\|u_{\bg}\|_{\cU}& \leq
  \left(\sum_{\bg} \binom{\bbt+\bg}{\bg} \mathfrak{h}^{\bg}\right)^{1/2}
  \left(\sum_{\bg}\mathfrak{h}^{-\bg}\|v_{\bbt+\bg}\|^2_X\|u_{\bg}\|^2_{\cU}
  \right)^{1/2}\\
  &=C_h\left(\frac{1}{(1-\mathfrak{h})^{\bbt}}\right)^{1/2}
  \left(\sum_{\bg}\mathfrak{h}^{-\bg}\|v_{\bbt+\bg}\|^2_X\|u_{\bg}\|^2_{\cU}
  \right)^{1/2}
 \end{split}
 \end{equation*}
 Next, take another sequence $\mathfrak{w}=\{w_k,\ k\geq 1\}$ of positive numbers and define the sequence $\mfc=\{c_k,\ k\geq 1\} $ by
 \begin{equation}
 \label{OU-c}
 c_k=\frac{w_k}{1-h_k}.
 \end{equation}
Then
\begin{equation*}
 \begin{split}
& \left(
  \sum_{\bbt,\bg} \sqrt{\binom{\bbt+\bg}{\bg}\, \binom{\ba}{\bbt}}
  \|v_{\bbt+\bg}\|_X\|u_{\ba-\bbt}\|_{\cU}\|u_{\bg}\|_{\cU} \right)^{2}
 \\ & \qquad \leq
  C_h^2\left(\sum_{\bbt} \binom{\ba}{\bbt}\mfc^{\bbt}\right)
 \left(  \sum_{\bbt\leq \ba}\|u_{\ba-\bbt}\|_{\cU}^2\mathfrak{w}^{-\bbt}
\left(\sum_{\bg}\mathfrak{h}^{-\bg}\|v_{\bbt+\bg}\|^2_X\|u_{\bg}\|^2_{\cU}
  \right)\right).
\end{split}
 \end{equation*}
 As a result,
\begin{equation*}
 \begin{split}
 \sum_{\ba} \|\big(\cL_u(v)\big)_{\ba}\|_X^2 \mfr^{2\ba}
  \leq
  &C_h^2\sum_{\bg} \mfr^{-2\bg}(1+\mfc)^{-\bg} \mathfrak{w}^{\bg}
  \mathfrak{h}^{-\bg}\|u_{\bg}\|^2_{\cU}\\
 &\sum_{\bbt} \mfr^{2(\bbt+\bg)}(1+\mfc)^{\bbt+\bg} \mathfrak{w}^{-(\bbt+\bg)}\|v_{\bbt+\bg}\|_X^2\\
& \sum_{\ba} \mfr^{2(\ba-\bbt)}(1+\mfc)^{\ba-\bbt} \|u_{\ba-\bbt}\|_{\cU}^2
  \end{split}
 \end{equation*}
 Then \eqref{OU-cnorm} holds if
 \begin{equation}
 \label{ou-eq1-3}
 r_k^2(1+c_k)=\frac{w_k}{r_k(1+c_k)h_k}=p_k^2,\ \ \ \frac{r_k^2(1+c)}{w_k}=q_k^2.
 \end{equation}
 The three equations in \eqref{ou-eq1-3} imply
 $$
(1+c_k)=\frac{p_k^2}{r_k^2},\ w_k= \frac{p_k^2}{q_k^2},\  h_k=\frac{1}{p_k^2q_k^2},
$$
and then \eqref{ps-ou1} follows from \eqref{OU-c}. Note that a particular
case of \eqref{ps-ou1} is $q_k=1/r_k$, $p_k^{-2}+1=r_k^{-2}$,
which is consistent with Theorem \ref{th:main-ps1} if we require the
range of $\bD_u$ to be in the domain of $\bdel_u$.

  \end{proof}

\begin{example}{\rm
Let  $\cU=X=\bR$. Then $\ba=n\in \{0,1,2,\ldots\}$,
$$
\xi_{\ba}:=\xi_{(n)}=\frac{\Hep_n(\xi)}{\sqrt{n}}, \ \xi:=\xi_{(1)},
$$
$$
u=\sum_{n\geq 0}u_n\xi_{(n)},\ v=\sum_{n\geq 0}u_n\xi_{(n)},\
f=\sum_{n\geq 0}f_n\xi_{(n)},\ u_n,v_n,f_n\in \bR.
$$
To begin, take $u=\xi$. Then
\begin{equation*}
 \begin{split}
 \bD_{\xi}(v)=\sum_{n\geq 1}&\sqrt{n}v_n\xi_{(n-1)},\
 \bdel_{\xi}(f)=\sum_{n\geq 0} \sqrt{n+1} f_{n+1} \xi_{(n)},\\
 &\cL_u(v)=\sum_{n\geq 1} n v_n \xi_{(n)}.
 \end{split}
 \end{equation*}

Next, let us illustrate the results of Theorems
\ref{th:main-ps1} and \ref{OU-PSp}. Let  $p,q,r$ be positive real numbers such that $$
\frac{1}{p}+\frac{1}{q}=\frac{1}{r},
$$
for example, $p=q=1,\ r=1/2$.
By Theorem  \ref{th:main-ps1}, if
$$
\sum_{n\geq 0} p^nu_n^2<\infty,\ \sum_{n\geq 0} \frac{v_n^2}{r^n} <\infty,
$$
then
$$
\sum_{n\geq 1} \frac{\big(\bD_u(v)\big)_n^2}{q^n}<\infty,
$$
and if
$$
\sum_{n\geq 0} p^nu_n^2<\infty,\ \sum_{n\geq 0} q^n{f_n^2} <\infty,
$$
then
$$
\sum_{n\geq 1} r^n{\big(\bdel_u(f)\big)_n^2}<\infty.
$$
If  $p,q,r$ are positive real numbers such that
$$
\left(\frac{1}{r}-\frac{1}{p}\right)\left(q-\frac{1}{p}\right)=1
$$
(for example, $p=1,\ q=2,\ r=1/2$) and
$$
\sum_{n\geq 0} p^nu_n^2<\infty,\ \sum_{n\geq 0} q^n{v_n^2} <\infty,
$$
then, by Theorem \ref{OU-PSp},
$$
\sum_{n\geq 0} r^n\big(\cL_u(v)\big)_n^2<\infty.
$$}
\end{example}


\providecommand{\bysame}{\leavevmode\hbox to3em{\hrulefill}\thinspace}
\providecommand{\MR}{\relax\ifhmode\unskip\space\fi MR }
\providecommand{\MRhref}[2]{%
  \href{http://www.ams.org/mathscinet-getitem?mr=#1}{#2}
}
\providecommand{\href}[2]{#2}

\end{document}